%% 
%% Copyright 2007-2025 Elsevier Ltd
%% 
%% This file is part of the 'Elsarticle Bundle'.
%% ---------------------------------------------
%% 
%% It may be distributed under the conditions of the LaTeX Project Public
%% License, either version 1.3 of this license or (at your option) any
%% later version.  The latest version of this license is in
%%    http://www.latex-project.org/lppl.txt
%% and version 1.3 or later is part of all distributions of LaTeX
%% version 1999/12/01 or later.
%% 
%% The list of all files belonging to the 'Elsarticle Bundle' is
%% given in the file `manifest.txt'.
%% 
%% Template article for Elsevier's document class `elsarticle'
%% with harvard style bibliographic references

\documentclass[preprint,12pt]{elsarticle}

%% Use the option review to obtain double line spacing
%% \documentclass[authoryear,preprint,review,12pt]{elsarticle}

%% Use the options 1p,twocolumn; 3p; 3p,twocolumn; 5p; or 5p,twocolumn
%% for a journal layout:
%% \documentclass[final,1p,times,authoryear]{elsarticle}
%% \documentclass[final,1p,times,twocolumn,authoryear]{elsarticle}
%% \documentclass[final,3p,times,authoryear]{elsarticle}
%% \documentclass[final,3p,times,twocolumn,authoryear]{elsarticle}
%% \documentclass[final,5p,times,authoryear]{elsarticle}
%% \documentclass[final,5p,times,twocolumn,authoryear]{elsarticle}

%% For including figures, graphicx.sty has been loaded in
%% elsarticle.cls. If you prefer to use the old commands
%% please give \usepackage{epsfig}

%% The amssymb package provides various useful mathematical symbols
\usepackage{amssymb}
%% The amsmath package provides various useful equation environments.
\usepackage{amsmath}
%% The amsthm package provides extended theorem environments
\usepackage{amsthm}
\usepackage{amsfonts}
\usepackage{hyperref}       % hyperlinks

\newtheorem{lem}{Lemma}[section]
\newtheorem{cor}[lem]{Corollary}
\newtheorem{thm}[lem]{Theorem}
\newtheorem{prop}[lem]{Proposition}
\newtheorem{conj}[lem]{Conjecture}
\newtheorem{rem}[lem]{Remark}
\newtheorem{example}[lem]{Example}

\newtheorem*{thm*}{Theorem}
\newtheorem*{prop*}{Proposition}
\newtheorem*{conj*}{Conjecture}
\newtheorem*{example*}{Example}
\newtheorem*{ack*}{Acknowledgments}

\newcommand{\g}{\mathfrak{g}}
\newcommand{\G}{\mathbf{G}}
\newcommand{\Complex}{\mathbb{C}}
\newcommand{\Natural}{\mathbb{N}}
\newcommand{\Mod}{\operatorname{mod}}

%% The lineno packages adds line numbers. Start line numbering with
%% \begin{linenumbers}, end it with \end{linenumbers}. Or switch it on
%% for the whole article with \linenumbers.
%% \usepackage{lineno}

\journal{Journal of Algebra}

\begin{document}

\begin{frontmatter}

%% Title, authors and addresses

%% use the tnoteref command within \title for footnotes;
%% use the tnotetext command for theassociated footnote;
%% use the fnref command within \author or \affiliation for footnotes;
%% use the fntext command for theassociated footnote;
%% use the corref command within \author for corresponding author footnotes;
%% use the cortext command for theassociated footnote;
%% use the ead command for the email address,
%% and the form \ead[url] for the home page:
%% \title{Title\tnoteref{label1}}
%% \tnotetext[label1]{}
%% \author{Name\corref{cor1}\fnref{label2}}
%% \ead{email address}
%% \ead[url]{home page}
%% \fntext[label2]{}
%% \cortext[cor1]{}
%% \affiliation{organization={},
%%            addressline={}, 
%%            city={},
%%            postcode={}, 
%%            state={},
%%            country={}}
%% \fntext[label3]{}

\title{Relative modality of elements in generalized Takiff Lie algebras} %% Article title

%% use optional labels to link authors explicitly to addresses:
%% \author[label1,label2]{}
%% \affiliation[label1]{organization={},
%%             addressline={},
%%             city={},
%%             postcode={},
%%             state={},
%%             country={}}
%%
%% \affiliation[label2]{organization={},
%%             addressline={},
%%             city={},
%%             postcode={},
%%             state={},
%%             country={}}

\author{Hugo Mathevet} %% Author name

%% Author affiliation
\affiliation{organization={Laboratoire de Mathématiques de Reims (LMR) - UMR 9008},%Department and Organization
            addressline={U.F.R. Sciences Exactes et Naturelles\\Moulin de la Housse - BP 1039}, 
            postcode={51687 REIMS cedex 2},
            country={France}}

%% Abstract
\begin{abstract}
%% Text of abstract
	 Given a natural number $m$ and a finite dimensional complex Lie algebra $\g$, the $m^{th}$ generalized Takiff Lie algebra of $\g$ is the Lie algebra $\g_m:=\g\otimes\Complex[T]/T^{m+1}$. For $n\geq m$, we define the $(m,n)$-modality of an adjoint orbit $\Omega_m$ in $\g_m$ to be the minimum codimension of an adjoint orbit in the pullback of $\Omega_m$ in $\g_n$. 
     
     In this paper, we study these invariants in generalized Takiff Lie algebras associated to a quadratic Lie algebra $\g$. We show that these invariants satisfies some concavity and hereditary properties from which we deduce that $(n-m)\chi(\g)$ is a lower bound, where $\chi(\g)$ is the index of $\g$. We prove that this lower bound is in fact an equality for a dense set of orbits, and that if $\g$ is reductive, it is always an equality when $m=0$ (and also some special orbits). We conjecture that equality holds for all $m$, $n$ when $\g$ is reductive.
\end{abstract}

%% Keywords
\begin{keyword}
%% keywords here, in the form: keyword \sep keyword
Takiff Lie algebras\sep Reductive Lie algebras\sep Adjoint action
%% PACS codes here, in the form: \PACS code \sep code

%% MSC codes here, in the form: \MSC code \sep code
%% or \MSC[2008] code \sep code (2000 is the default)

\MSC 17B08 \sep 17B10 \sep 17B20
\end{keyword}

\end{frontmatter}

%% Add \usepackage{lineno} before \begin{document} and uncomment 
%% following line to enable line numbers
%% \linenumbers

%% main text
\section{Introduction}
Throughout this paper, $\g$ is a finite-dimensional Lie algebra over the complex numbers and the topology considered is the Zariski topology.

Denote by $\g_\infty$ the Lie algebra $\g\otimes\Complex[T]$ whose Lie bracket is given by

\[ [\mathbf{x},\mathbf{y}]=\sum_{i,j\in\Natural}{[x_i,y_j]\otimes T^{i+j}}\]
for all $\mathbf{x}=\sum_{n\in\Natural}{x_n\otimes T^n}$ and $\mathbf{y}=\sum_{n\in\Natural}{y_n\otimes T^n}$ in $\g_\infty$. Thus $[\mathbf{x},\mathbf{y}]=\sum_{n\in\Natural}{z_k\otimes T^k}$ where $z_k=\sum_{i=0}^{k}{[x_i,y_{k-i}]}$.

Let $m$ be a natural number and $\mathfrak{p}_m$ be the ideal $\g\otimes T^{m+1}\Complex[T]$ of $\g_\infty$. The quotient $\g_m:=\g_\infty/\mathfrak{p}_m$ is called the $m^{th}$ generalized Takiff Lie algebra of $\g$. It is a finite-dimensional complex Lie algebra of dimension $(m+1)\dim\g$. We denote $\pi_{n,m}$ the canonical projection from $\g_n$ onto $\g_m$. For $\mathbf{x}$ in $\g_\infty$, we denote by $\mathbf{x}(m)$ the projection of $\mathbf{x}$ in $\g_m$. In particular, we have $[\mathbf{x}(m),\mathbf{y}(m)]=[\mathbf{x},\mathbf{y}](m)$. Since any element of $\g_m$ is the projection of an element $\mathbf{x}=\sum_{k\in\Natural}{x_k\otimes T^k}$ of $\g_\infty$, we shall write elements of $\g_m$ as $(x_0,x_1,...,x_m)$ or $\mathbf{x}(m)$.

These Lie algebras were introduced by Takiff in \cite{Takiff1971} for $m=1$ and $\g$ semisimple giving an exemple of a non-reductive Lie algebra whose symmetric invariants form a polynomial ring. The generalization to all $m$ is due to Raïs and Tauvel in \cite{RaisTauvel1992}. Multivariable versions were studied by Macedo and Savage in \cite{MacedoSavage2019} and by  Panyushev and Yakimova in \cite{PanyushevYakimova2019}. Generalized Takiff algebras also appear in \cite{MoreauYu2016} in the study of jet schemes of the closure of nilpotent orbits of simple Lie algebras. They also arise in mathematical physics under the name of truncated current Lie algebras. In this paper we introduce and study invariants of the adjoint action relative to the choice of an element $\mathbf{x}$ in $\g_\infty$.

The modality of an algebraic group $\G$ acting on an irreducible variety $X$ is defined to be the integer $$\Mod(\G,X):=\min_{x\in X}\operatorname{codim}_X\G\cdot x,$$ where $\G\cdot x$ denotes the $\G$-orbit of $x$. For example the notion of index of $\g$, denoted $\chi(\g)$, introduced by Dixmier \cite{Dixmier1974} is equal to $\Mod(\G,\g^*)$ when $\g$ is algebraic, where $\G$ is the algebraic adjoint group of $\g$ acting on $\g^*$, the dual of $\g$, by the coadjoint action. In the semisimple case it is equal to the rank of the Lie algebra.

Assume that $\g$ is algebraic and let $\G_k$ be the algebraic adjoint group of $\g_k$ for all positive integer $k$. For $\mathbf{x}\in\g_\infty$ and $0\leq m\leq n$, $\Omega_{m,n}(\mathbf{x}):=\pi_{n,m}^{-1}(\G_m\cdot\mathbf{x}(m))$ is an irreducible $\G_n$-stable subvariety of $\g_n$. We are interested in the following integer:

\[\Mod_{m,n}(\mathbf{x}):=\Mod(\G_n,\Omega_{m,n}(\mathbf{x})).\]

 We call this integer the $(m,n)$-modality of the element $\mathbf{x}$, note that it only depends on the $\G_m$-orbit of $\mathbf{x}(m)$. This quantity can be expressed in terms of dimension of centralizers as follows: 

\[\Mod_{m,n}(\mathbf{x})=\min_{\substack{\mathbf{y}\in\g_\infty\\\mathbf{y}(
    m)=\mathbf{x}(m)}}{\dim\g_n^{\mathbf{y}(n)}-\dim\g_m^{\mathbf{x}(m)}},\]
where for an element $z$ in a Lie algebra $\mathfrak{h}$, $\mathfrak{h}^{z}$ denotes the centralizer of $z$ in $\mathfrak{h}$. This second definition does not require $\g$ to be algebraic and will be the one used in practice.

The first explicit examples are for all integers $n$ and all elements $\mathbf{x}$ of $\g_\infty$, $\Mod_{n,n}(\mathbf{x})=0$ and if the Lie algebra $\g$ is abelian then, for all $m$ and $n$ natural numbers with $m\leq n$ and $\mathbf{x}$ elements of $\g_\infty$, $\Mod_{m,n}(\mathbf{x})=(n-m)\dim\g$.   

In this paper, the Lie algebra $\g$ is assumed to be quadratic, meaning that its adjoint and coadjoint representation are isomorphic. The notion of quadratic Lie algebras encompasses many Lie algebras but most noticeably reductive and semisimple Lie algebras, we refer the reader to Section \ref{sec:quad} for more examples. Recall that an element $x$ of $\g$ is called regular if its centralizer is of minimal dimension, here we have $\dim\g^x=\chi(\g)$. The dimensions of adjoint centralizers have same parity (see \cite{Dixmier1974} 1.11.2), thus we shall call an element $x$ in $\g$ subregular if $\dim\g^x=\chi(\g)+2$.

Our motivation to study $(m,n)$-modality is to obtain a better understanding of adjoint orbits in generalized Takiff Lie algebras. This work is very much inspired by and relies on a theorem of Raïs and Tauvel and a conjecture of Elashvili, proved by Yakimova \cite{Yakimova2006} for classical reductive Lie algebras, and de Graaf \cite{deGraaf2007} for exceptional ones with an almost case free proof by Charbonnel and Moreau \cite{CharbonnelMoreau2010}.
\begin{thm}[{\cite[Théorème 2.8]{RaisTauvel1992}}]
\label{thm:RT}
    Let $m$ be a natural number, we have $\chi(\g_m)=(m+1)\chi(\g)$, and $(x_0,x_1,...,x_m)$ is a regular element of $\g_m$ if and only if $x_0$ is regular in $\g$.
\end{thm}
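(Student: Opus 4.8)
\emph{Proof proposal.} The plan is to reduce the statement to a single rank computation for the operator $\operatorname{ad}_{\g_m}(\mathbf{x}(m))$. Set $A_i=\operatorname{ad}_\g(x_i)$ and decompose $\g_m=\bigoplus_{i=0}^{m}\g\otimes T^{i}$; from the bracket formula the matrix of $\operatorname{ad}_{\g_m}(\mathbf{x}(m))$ in this decomposition is block lower triangular and constant along diagonals, its $(k,\ell)$-block being $A_{k-\ell}$ for $k\ge\ell$. Equivalently, writing $\hat{\mathbf{x}}=\sum_{i=0}^{m}x_i\otimes T^{i}$ and letting $\hat M(T)=\sum_{i=0}^{m}A_iT^{i}$ be the associated $\dim\g\times\dim\g$ matrix over $\Complex[T]$, this operator is $\hat M(T)$ reduced modulo $T^{m+1}$, acting $\Complex$-linearly on the free $\Complex[T]/T^{m+1}$-module $\g\otimes\Complex[T]/T^{m+1}$. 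Since $\g$ is quadratic, so is $\g_m$ (equip $\g\otimes\Complex[T]/T^{m+1}$ with the product of an invariant form on $\g$ and the nondegenerate pairing on $\Complex[T]/T^{m+1}$ sending $(a,b)$ to the coefficient of $T^{m}$ in $ab$), hence $\chi(\g_m)=\min_{z\in\g_m}\dim\g_m^{z}$ while $\chi(\g)=\dim\g-r$ with $r:=\max_{x\in\g}\operatorname{rank}\operatorname{ad}_\g(x)$. Thus the theorem amounts to showing that $\operatorname{rank}_\Complex(\hat M(T)\bmod T^{m+1})\le(m+1)r$ for every $(x_0,\dots,x_m)$, that this bound is attained, and that a given $(x_0,\dots,x_m)$ attains it if and only if $x_0$ is regular.

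The crux is to diagonalize $\hat M(T)$ by Smith normal form over the principal ideal domain $\Complex[T]$: there are $U,V\in\operatorname{GL}_{\dim\g}(\Complex[T])$ with $U\hat M(T)V=\operatorname{diag}(d_1,\dots,d_\rho,0,\dots,0)$, where $\rho=\operatorname{rank}_{\Complex(T)}\hat M(T)$ and $d_i\ne 0$. As $\det U,\det V\in\Complex^{\times}$, both $U$ and $V$ remain invertible after reduction modulo $T^{m+1}$ and after specialization at $T=0$. Writing $d_i=T^{a_i}u_i$ with $u_i(0)\ne 0$, multiplication by $d_i$ on $\Complex[T]/T^{m+1}$ has $\Complex$-rank $\max(0,m+1-a_i)$, so $\operatorname{rank}_\Complex(\hat M(T)\bmod T^{m+1})=\sum_{i=1}^{\rho}\max(0,m+1-a_i)\le(m+1)\rho$. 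One also has $\rho\le r$, because the identical vanishing of every $(r+1)\times(r+1)$ minor of the structure matrix of $\operatorname{ad}_\g$ in a fixed basis is a polynomial identity with coefficients in $\Complex$, hence stays valid over the field $\Complex(T)$. This proves the inequality. Specializing the factorization at $T=0$ gives $\operatorname{rank}\operatorname{ad}_\g(x_0)=\operatorname{rank}_\Complex\hat M(0)=\#\{i:a_i=0\}$; therefore $\operatorname{rank}_\Complex(\hat M(T)\bmod T^{m+1})=(m+1)r$ forces $\rho=r$ and $a_1=\dots=a_\rho=0$, i.e.\ $x_0$ regular, while conversely $x_0$ regular gives $r=\#\{i:a_i=0\}\le\rho\le r$, hence $\rho=r$, all $a_i=0$, and equality. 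Finally the bound is attained by $(x_0,0,\dots,0)$ with $x_0$ regular, since there $\operatorname{ad}_{\g_m}(\mathbf{x}(m))=\operatorname{ad}_\g(x_0)\otimes\operatorname{id}$ and $\g_m^{\mathbf{x}(m)}=\g^{x_0}\otimes\Complex[T]/T^{m+1}$ has dimension $(m+1)\chi(\g)$; combined with the inequality this yields $\chi(\g_m)=(m+1)\chi(\g)$.

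The block-Toeplitz description of $\operatorname{ad}_{\g_m}(\mathbf{x}(m))$ and the Smith-normal-form bookkeeping modulo $T^{m+1}$ are routine. The only step needing a genuine argument is the inequality $\rho\le r$, namely that extending scalars from $\Complex$ to $\Complex(T)$ cannot raise the maximal rank of $\operatorname{ad}$; I would handle it via the base-change invariance of the polynomial identities defining ``rank $\ge r+1$'', as above. For completeness one may note an alternative that avoids Smith normal form: the map $(y_0,\dots,y_m)\mapsto y_0$ sends $\g_m^{\mathbf{x}(m)}$ into $\g^{x_0}$ with kernel isomorphic, after an index shift, to $\g_{m-1}^{\mathbf{x}(m-1)}$, which reduces the computation to an induction on $m$ once one proves this map is onto when $x_0$ is regular; establishing that surjectivity, which ultimately exploits the commutativity of the centralizer of a regular element, is the delicate point of that route.
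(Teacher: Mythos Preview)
Your argument is correct. Note, however, that the paper does not give its own proof of this theorem: it is quoted from Ra\"{\i}s--Tauvel. The method there, of which the paper extracts the key ingredients (the recursion $\dim\g_m^{\mathbf{x}(m)}=\dim\g_{m-1}^{\mathbf{x}(m-1)}+\dim((\g_{m-1}^{\mathbf{x}(m-1)})^{\mathbf{h}(m-1)})-\dim\g_{m-2}^{\mathbf{x}(m-2)}$ and the $m=1$ formula $\dim\g_1^{(x_0,x_1)}=\dim\g^{x_0}+\dim(\g^{x_0})^{K(x_1,\cdot)|_{\g^{x_0}}}$), is precisely the inductive route you sketch in your closing sentence: the projection $(y_0,\dots,y_m)\mapsto y_0$ with kernel isomorphic to $\g_{m-1}^{\mathbf{x}(m-1)}$, followed by an analysis of the image via the invariant form. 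Your Smith-normal-form approach over $\Complex[T]$ is a genuinely different proof. It is cleaner for the bare statement, since both directions of the regularity criterion drop out of the single equality $\operatorname{rank}\operatorname{ad}_\g(x_0)=\#\{i:a_i=0\}$, and your handling of the one non-formal step, $\rho\le r$, is exactly right (the $(r+1)$-minors vanish identically on $\g$ over $\Complex$, hence as polynomials, hence after base change to $\Complex(T)$). What your argument does \emph{not} yield, though, is the recursive dimension formula itself, and that formula is what the paper actually needs for the concavity of $(\dim\g_m^{\mathbf{x}(m)})_m$ and everything built on it. So for the purposes of the present paper the inductive proof is the more useful one, even if yours is the tidier standalone argument for the theorem as stated.
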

\begin{thm}[Elashvili's conjecture]
\label{conj:Elashvili}
    Let $\g$ be a reductive Lie algebra and $x$ an element of $\g$, then:
    $$\chi(\g^x)=\chi(\g).$$
\end{thm}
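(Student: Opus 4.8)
The plan is to reduce Elashvili's conjecture to the case of a nilpotent element of a \emph{simple} Lie algebra, and then to settle that case either by a uniform sheet‑induction argument or, failing that, by explicit computation on the finitely many residual orbits. First I would use the Jordan decomposition $x=s+n$ with $s$ semisimple, $n$ nilpotent and $[s,n]=0$. Since $s$ lies in some Cartan subalgebra $\mathfrak h$ of $\g$, the reductive subalgebra $\g^s$ contains $\mathfrak h$ as a Cartan subalgebra, so $\chi(\g^s)=\operatorname{rk}\g^s=\operatorname{rk}\g=\chi(\g)$. As $\g^x=(\g^s)^n$ and $n$ is nilpotent in $\g^s$, it suffices to prove the statement for nilpotent elements of an arbitrary reductive Lie algebra. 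Splitting off the centre ($\g=\mathfrak z(\g)\oplus[\g,\g]$, $n\in[\g,\g]$, $\g^n=\mathfrak z(\g)\oplus[\g,\g]^n$) and using additivity of the index over direct sums, one reduces to $\g$ semisimple, and then, again by additivity, to $\g$ simple and $e:=n$ nilpotent.

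For such $e$ one direction is the easy half: the inequality $\chi(\g^e)\ge\operatorname{rk}\g$ is Vinberg's inequality, which I would simply invoke. The content of the conjecture is therefore the reverse inequality $\chi(\g^e)\le\operatorname{rk}\g$, i.e.\ the construction of some $\xi\in(\g^e)^*$ with $\dim(\g^e)^\xi\le\operatorname{rk}\g$.

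To get the upper bound I would call $e$ \emph{good} if $\chi(\g^e)=\operatorname{rk}\g$ and try to propagate goodness through the poset of nilpotent orbits. Regular nilpotent elements are good because then $\g^e$ is abelian of dimension $\operatorname{rk}\g$, and $e=0$ is trivially good. The key inductive step is that if the nilpotent orbit of $e$ is induced (in the Lusztig--Spaltenstein sense) from a nilpotent orbit $\mathcal O_{\mathfrak l}$ in a proper Levi subalgebra $\mathfrak l$, and $\mathcal O_{\mathfrak l}$ consists of good elements of $\mathfrak l$, then $e$ is good; concretely this rests on a dimension estimate for the relevant moment map, equivalently on showing that the argument‑shift (Mishchenko--Fomenko) subalgebra of $\operatorname{S}(\g^e)$ attains transcendence degree $\operatorname{rk}\g$. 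Since every nilpotent orbit is induced from a rigid one in a Levi, iterating collapses the problem to the \emph{rigid} nilpotent orbits. In the classical types these are highly constrained and can be treated directly (recovering Yakimova's analysis); in the exceptional types one is left with a short finite list, to be checked by explicitly computing $\g^e$ and a generic stabilizer in $(\g^e)^*$ (this is where de~Graaf's machine computation, or the residual hand computations of Charbonnel--Moreau, come in).

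The reductions and the lower bound are formal; the genuine obstacle is precisely the rigid case, where by definition there is no proper Levi to induct from, so one cannot avoid either an orbit‑by‑orbit determination of the centralizers and their generic coadjoint stabilizers, or a subtle verification that the argument‑shift subalgebra of $\operatorname{S}(\g^e)$ is of the expected size — and the latter itself ultimately reduces to finitely many case checks. I would therefore not expect the sketch above to be fully case‑free, and the main difficulty to overcome is exactly this irreducible core of rigid nilpotent orbits.
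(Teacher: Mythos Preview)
The paper does not prove this theorem. It is stated as a known result, attributed to Yakimova for classical types, de~Graaf for exceptional types, and Charbonnel--Moreau for an almost case-free approach; the paper then \emph{uses} it as an input. In particular, the only argument the paper supplies around this statement is the observation (via Lemma~\ref{lem:m=1}) that $\Mod_{0,1}(\mathbf{x})=\chi(\g^{x_0})$, which shows that Theorem~\ref{conj:Elashvili} is equivalent to Theorem~\ref{thm: 01}; from there Proposition~\ref{prop:extension} gives Theorem~\ref{thm:main}. No independent proof of $\chi(\g^x)=\chi(\g)$ is attempted.

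Your sketch, by contrast, outlines the actual strategy of the cited references: Jordan reduction to nilpotent elements, reduction to the simple case, Vinberg's inequality for the lower bound, propagation via Lusztig--Spaltenstein induction (this is essentially the Charbonnel--Moreau mechanism), and collapse onto the finitely many rigid orbits which must be handled directly (Yakimova in classical types, de~Graaf's computation in exceptional ones). The sketch is an accurate summary of how the theorem is established in the literature, and you are right that the rigid core cannot at present be avoided case-free. So your proposal is correct as a proof plan, but it bears no resemblance to what the paper does, since the paper simply quotes the result.
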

The notion of $(m,n)$-modality will provide a common framework for these results. For example, by proving that $\Mod_{0,1}(\mathbf{x})=\chi(\g^{x_0})$ for all $\mathbf{x}$ in $\g_\infty$, we show that Theorem \ref{conj:Elashvili} is equivalent to 
\begin{thm}
\label{thm: 01}
    Let $\g$ be a reductive Lie algebra and $\mathbf{x}$ an element of $\g_\infty$, then:
    $$\Mod_{0,1}(\mathbf{x})=\chi(\g).$$
\end{thm}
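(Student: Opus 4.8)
The plan is to reduce Theorem~\ref{thm: 01} to the identity
\[\Mod_{0,1}(\mathbf{x})=\chi(\g^{x_0})\qquad\text{for all }\mathbf{x}\in\g_\infty,\]
which I will prove using only that a reductive Lie algebra is quadratic, and then to invoke Elashvili's conjecture (Theorem~\ref{conj:Elashvili}) to replace $\chi(\g^{x_0})$ by $\chi(\g)$. Since every $x_0\in\g$ occurs here (take $\mathbf{x}=x_0\otimes 1$), this identity also shows that Theorem~\ref{thm: 01} implies Theorem~\ref{conj:Elashvili}, so the two statements are equivalent, as announced.

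First I would make the centralizer formula for $\Mod_{0,1}$ from the introduction explicit. Writing elements of $\g_1$ as pairs $(a,b)$, the bracket is $[(a,b),(c,d)]=([a,c],[a,d]+[b,c])$. An element $\mathbf{y}\in\g_\infty$ with $\mathbf{y}(0)=x_0$ has $\mathbf{y}(1)=(x_0,y_1)$ for some $y_1\in\g$, and $(a,b)$ centralizes $(x_0,y_1)$ precisely when $a\in\g^{x_0}$ and $[x_0,b]=[a,y_1]$. For fixed $a\in\g^{x_0}$ the latter equation has a solution $b$ if and only if $[a,y_1]\in\operatorname{im}(\operatorname{ad}x_0)$, in which case the solution set is a coset of $\g^{x_0}$. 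Hence
\[\dim\g_1^{(x_0,y_1)}-\dim\g^{x_0}=\dim\bigl\{a\in\g^{x_0}\ :\ [a,y_1]\in\operatorname{im}(\operatorname{ad}x_0)\bigr\},\]
and $\Mod_{0,1}(\mathbf{x})$ is the minimum of the right-hand side over all $y_1\in\g$.

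Next, fix a nondegenerate invariant symmetric bilinear form $\langle\,\cdot\,,\,\cdot\,\rangle$ on $\g$ and set $\mathfrak{q}=\g^{x_0}$. Since $\operatorname{ad}x_0$ is skew-adjoint for this form, $\operatorname{im}(\operatorname{ad}x_0)=(\ker\operatorname{ad}x_0)^{\perp}=\mathfrak{q}^{\perp}$. So for $a\in\mathfrak{q}$ the condition $[a,y_1]\in\mathfrak{q}^{\perp}$ rewrites, using invariance, as $\langle y_1,[a,c]\rangle=0$ for all $c\in\mathfrak{q}$; writing $\xi_{y_1}:=\langle y_1,\,\cdot\,\rangle|_{\mathfrak{q}}\in\mathfrak{q}^{*}$, this says exactly that $a$ belongs to the stabilizer of $\xi_{y_1}$ for the coadjoint action of $\mathfrak{q}$ on $\mathfrak{q}^{*}$. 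The restriction map $\g\to\mathfrak{q}^{*}$, $y_1\mapsto\xi_{y_1}$, is surjective because its kernel is $\mathfrak{q}^{\perp}$; hence $\xi_{y_1}$ ranges over all of $\mathfrak{q}^{*}$, and
\[\Mod_{0,1}(\mathbf{x})=\min_{y_1\in\g}\dim\mathfrak{q}^{\xi_{y_1}}=\min_{\xi\in\mathfrak{q}^{*}}\dim\mathfrak{q}^{\xi}=\chi(\mathfrak{q})=\chi(\g^{x_0}).\]
Theorem~\ref{conj:Elashvili} then yields $\Mod_{0,1}(\mathbf{x})=\chi(\g)$.

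The two structural inputs above — self-adjointness of $\operatorname{ad}x_0$ and surjectivity of $\g\to(\g^{x_0})^{*}$ — are immediate from quadraticity, so the only serious ingredient is Elashvili's conjecture itself, which I would cite from \cite{Yakimova2006,deGraaf2007,CharbonnelMoreau2010} rather than reprove; in that sense this theorem contains no new difficulty beyond recognizing $\Mod_{0,1}(\mathbf{x})$ as the index of the centralizer $\g^{x_0}$.
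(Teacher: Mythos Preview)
Your proof is correct and follows essentially the same approach as the paper: you establish $\Mod_{0,1}(\mathbf{x})=\chi(\g^{x_0})$ via the centralizer description in $\g_1$, the identity $\operatorname{im}(\operatorname{ad}x_0)=(\g^{x_0})^{\perp}$, and surjectivity of $y_1\mapsto\langle y_1,\cdot\rangle|_{\g^{x_0}}$, then invoke Elashvili's conjecture. The only difference is organizational: the paper has already packaged your centralizer computation as Lemma~\ref{lem:m=1}, so its proof of Theorem~\ref{thm: 01} simply quotes that lemma and the surjectivity argument, whereas you redo the computation inline.
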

One can then combine such results together with reformulations of structural theorems on generalized Takiff Lie algebras like Theorem \ref{thm:RT} to prove various consequences and generalizations.
\paragraph{Main results}
The main tool we used to study properties of $(m,n)$-modality is the concavity of the sequences $(\dim\g_m^{\mathbf{x}(m)})_{m\in\Natural}$ for all elements $\mathbf{x}$ of $\g_\infty$ proven in Proposition \ref{prop:adjoint concavity}. A first consequence is a lower bound for the $(m,n)$-modality.

\begin{thm}
    \label{thm:lower bound}
    Let $m$ and $n$ be two natural numbers with $m\leq n$ and $\mathbf{x}$ an element of $\g_\infty$, then
    $$\Mod_{m,n}(\mathbf{x})\geq (n-m)\chi(\g).$$
\end{thm}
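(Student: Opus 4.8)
The plan is to compare centralizer dimensions along the entire tower $\g_0,\g_1,\g_2,\dots$ and to extract the bound from an asymptotic slope estimate. Fix $\mathbf{x}\in\g_\infty$ and let $\mathbf{y}\in\g_\infty$ be arbitrary with $\mathbf{y}(m)=\mathbf{x}(m)$; set $d_k:=\dim\g_k^{\mathbf{y}(k)}$ for $k\in\Natural$. By the centralizer expression for $\Mod_{m,n}$ it suffices to prove that $d_n-d_m\geq(n-m)\chi(\g)$ for every such $\mathbf{y}$ and then to minimize over $\mathbf{y}$; note that $d_m=\dim\g_m^{\mathbf{x}(m)}$ since it depends only on $\mathbf{y}(m)=\mathbf{x}(m)$, so the terms $d_k$ with $m<k<n$ and $k\geq n$ are simply intermediate and tail values of a single concave chain that we are free to use.

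First I would invoke Proposition \ref{prop:adjoint concavity}: the sequence $(d_k)_{k\in\Natural}$ is concave, i.e. the increments $\delta_k:=d_{k+1}-d_k$ form a non-increasing sequence. Then I would feed in the lower bound $d_k\geq\chi(\g_k)=(k+1)\chi(\g)$, valid for all $k$ by Theorem \ref{thm:RT} (together with the fact that $\g_k$ is again quadratic, so that the minimum of the adjoint centralizer dimension over $\g_k$ equals $\chi(\g_k)$). Concavity gives, for any $j>k$,
\[ d_j-d_k=\sum_{i=k}^{j-1}\delta_i\ \leq\ (j-k)\,\delta_k,\qquad\text{hence}\qquad \delta_k\ \geq\ \frac{(j+1)\chi(\g)-d_k}{\,j-k\,}. \]
Letting $j\to\infty$ yields $\delta_k\geq\chi(\g)$ for every $k$. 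Summing over $k=m,\dots,n-1$ gives $d_n-d_m=\sum_{k=m}^{n-1}\delta_k\geq(n-m)\chi(\g)$, and taking the minimum over all admissible $\mathbf{y}$ gives the claimed inequality.

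Granting the concavity statement, the argument is short; the point that deserves attention is that one genuinely has to run the sequence $(d_k)$ out to infinity — it is essential that $\mathbf{y}$ lies in $\g_\infty$ rather than in some finite $\g_N$, so that the asymptotic input $d_k\geq(k+1)\chi(\g)$ is available for arbitrarily large $k$; truncating too early would only give a weaker bound. (If one prefers to avoid the limit: a concave sequence of integers bounded below has eventually constant increments, say $\delta_k=\delta$ for $k\geq k_0$; then $d_{k_0}+(k-k_0)\delta=d_k\geq(k+1)\chi(\g)$ for all large $k$ forces $\delta\geq\chi(\g)$, and concavity propagates $\delta_k\geq\delta\geq\chi(\g)$ to all $k$.) Thus the only external ingredients are Proposition \ref{prop:adjoint concavity} and the Raïs–Tauvel equality $\chi(\g_k)=(k+1)\chi(\g)$, and the quadratic hypothesis enters only through these.
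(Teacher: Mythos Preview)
Your proof is correct and uses the same two ingredients as the paper's argument: the concavity of $(\dim\g_k^{\mathbf{y}(k)})_k$ (Proposition~\ref{prop:adjoint concavity}) together with the asymptotic lower bound $\dim\g_k^{\mathbf{y}(k)}\geq\chi(\g_k)=(k+1)\chi(\g)$ from Theorem~\ref{thm:RT}. The paper phrases this as a contradiction (assuming the average slope over $[m,n]$ is below $\chi(\g)$ and running the Slopes lemma forward to violate Theorem~\ref{thm:RT} for large $n'$), whereas you argue directly that each individual increment satisfies $\delta_k\geq\chi(\g)$ and then sum; these are two presentations of the same slope-versus-asymptote idea, so the approach is essentially the same.
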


We then prove that the condition of attaining this lower bound enjoys some nice hereditary properties.

\begin{prop}
\label{prop:extension}
     Let $m$ and $n$ be two natural numbers with $m<n$ and $\mathbf{x}$ an element of $\g_\infty$ such that $\Mod_{m,n}(\mathbf{x})=(n-m)\chi(\g)$, then for all natural numbers $n'\geq m$
     \[\Mod_{m,n'}(\mathbf{x})=(n'-m)\chi(\g).\]
\end{prop}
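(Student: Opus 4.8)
The plan is to pass to the sequence of successive differences of centralizer dimensions along the tower and use concavity to propagate the equality. First I would choose $\mathbf{y}\in\g_\infty$ with $\mathbf{y}(m)=\mathbf{x}(m)$ realising the minimum $\Mod_{m,n}(\mathbf{x})$; such a $\mathbf{y}$ exists because $\dim\g_n^{\mathbf{y}(n)}$ takes only finitely many (integer) values as $\mathbf{y}(n)$ runs over the affine fibre $\pi_{n,m}^{-1}(\mathbf{x}(m))$. Set $d_k:=\dim\g_{k+1}^{\mathbf{y}(k+1)}-\dim\g_k^{\mathbf{y}(k)}$ for $k\in\Natural$, so that $\dim\g_{k'}^{\mathbf{y}(k')}-\dim\g_{k}^{\mathbf{y}(k)}=\sum_{i=k}^{k'-1}d_i$ whenever $k\leq k'$. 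Two facts are in force: applying Theorem~\ref{thm:lower bound} to the pair $(k,k+1)$ gives $d_k\geq\Mod_{k,k+1}(\mathbf{y})\geq\chi(\g)$ for every $k$; and by Proposition~\ref{prop:adjoint concavity} the sequence $\bigl(\dim\g_k^{\mathbf{y}(k)}\bigr)_k$ is concave, hence the sequence $(d_k)_k$ is non-increasing.

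Next, the hypothesis $\Mod_{m,n}(\mathbf{x})=(n-m)\chi(\g)$ reads $\sum_{k=m}^{n-1}d_k=(n-m)\chi(\g)$: a sum of $n-m$ integers each at least $\chi(\g)$, whose total is $(n-m)\chi(\g)$, so necessarily $d_k=\chi(\g)$ for every $k$ with $m\leq k\leq n-1$. In particular $d_{n-1}=\chi(\g)$, and since $(d_k)_k$ is non-increasing and bounded below by $\chi(\g)$, we obtain $d_k=\chi(\g)$ for all $k\geq n-1$, and therefore $d_k=\chi(\g)$ for all $k\geq m$.

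Finally, for any natural number $n'\geq m$ this yields $\dim\g_{n'}^{\mathbf{y}(n')}-\dim\g_m^{\mathbf{x}(m)}=\sum_{k=m}^{n'-1}d_k=(n'-m)\chi(\g)$ (with the empty sum when $n'=m$, consistent with $\Mod_{m,m}(\mathbf{x})=0$). Since $\mathbf{y}(m)=\mathbf{x}(m)$, the element $\mathbf{y}$ is an admissible competitor in the definition of $\Mod_{m,n'}(\mathbf{x})$, so $\Mod_{m,n'}(\mathbf{x})\leq(n'-m)\chi(\g)$; the reverse inequality is exactly Theorem~\ref{thm:lower bound}, and the proposition follows.

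There is no serious obstacle once Proposition~\ref{prop:adjoint concavity} and Theorem~\ref{thm:lower bound} are available: the content is simply that a concave integer sequence whose increments are bounded below by $\chi(\g)$, one of whose increments (here $d_{n-1}$) equals $\chi(\g)$, must have all subsequent increments equal to $\chi(\g)$ as well. The only point deserving a line of care is the existence of an honest minimiser $\mathbf{y}$, so that the equalities $d_k=\chi(\g)$ hold on the nose rather than merely in the limit; this reduces, as noted above, to the finiteness of the set of possible values of $\dim\g_n^{\mathbf{y}(n)}$ on the fibre.
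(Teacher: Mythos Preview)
Your proof is correct and rests on the same two ingredients as the paper's---Proposition~\ref{prop:adjoint concavity} (concavity) and Theorem~\ref{thm:lower bound}---but you package them more cleanly: by passing to the successive differences $d_k$ and exploiting that they are non-increasing and bounded below by $\chi(\g)$, you obtain $d_k=\chi(\g)$ for all $k\geq m$ in one stroke, whereas the paper treats the cases $n'<n$ and $n'>n$ separately via two applications of the Slopes lemma. Your route also makes transparent the stronger intermediate conclusion (all increments of the chosen $\mathbf{y}$ equal $\chi(\g)$ from $m$ onward), which the paper only establishes implicitly; this is essentially the content of Proposition~\ref{prop:full ext} applied to your $\mathbf{y}$.
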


\begin{prop}
\label{prop:full ext}
Let $\mathbf{x}$ be an element of $\g_\infty$ and $m$, $n$ two natural numbers with $m<n$. Then, for all elements $\mathbf{y}$ of $\g_\infty$ such that $\mathbf{x}(m)=\mathbf{y}(m)$ and $\dim\g_n^{\mathbf{y}(n)}=(n-m)\chi(\g)+\dim\g_m^{\mathbf{x}(m)}$, we have that for all pairs of integers $(m',n')$ with $m\leq m'\leq n'$
   $$\Mod_{m',n'}(\mathbf{y})=(n'-m')\chi(\g).$$
\end{prop}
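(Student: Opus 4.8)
The plan is to reduce Proposition \ref{prop:full ext} to Proposition \ref{prop:extension} by exploiting concavity of the sequence $(\dim\g_k^{\mathbf{y}(k)})_{k\in\Natural}$ (Proposition \ref{prop:adjoint concavity}) together with the lower bound of Theorem \ref{thm:lower bound}. First I would observe that the hypothesis $\dim\g_n^{\mathbf{y}(n)}=(n-m)\chi(\g)+\dim\g_m^{\mathbf{x}(m)}$ says precisely that $\mathbf{y}$ realizes the minimum in the centralizer formula for $\Mod_{m,n}$, so that $\Mod_{m,n}(\mathbf{y})=\Mod_{m,n}(\mathbf{x})=(n-m)\chi(\g)$; and since $\mathbf{x}(m)=\mathbf{y}(m)$, the relevant orbit data below level $m$ coincide. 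The key point is then to propagate this equality to the pair $(m,n')$ and to intermediate levels $m'$.

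Next I would handle the upper range: by Proposition \ref{prop:extension} applied to $\mathbf{y}$ (with the roles of $m,n$ unchanged), we immediately get $\Mod_{m,n'}(\mathbf{y})=(n'-m)\chi(\g)$ for all $n'\geq m$. The substance is therefore to bootstrap from base level $m$ up to an arbitrary base level $m'$ with $m\le m'$. For this I would look at the concave sequence $a_k:=\dim\g_k^{\mathbf{y}(k)}$ for a fixed witness $\mathbf{y}$ realizing the minimum at level $n'$. Concavity forces the increments $a_{k+1}-a_k$ to be non-increasing in $k$; on the other hand Theorem \ref{thm:lower bound} gives $a_k-a_m\ge(k-m)\chi(\g)$ for all $k\ge m$, i.e. the average increment over $[m,k]$ is at least $\chi(\g)$, while the telescoped equality $a_{n'}-a_m=(n'-m)\chi(\g)$ forces the average increment over $[m,n']$ to equal $\chi(\g)$. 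Combining with the monotonicity of increments coming from concavity, I would deduce that every increment $a_{k+1}-a_k$ for $m\le k<n'$ equals exactly $\chi(\g)$. This in particular gives $a_{n'}-a_{m'}=(n'-m')\chi(\g)$, hence $\Mod_{m',n'}(\mathbf{y})\le(n'-m')\chi(\g)$; the reverse inequality is Theorem \ref{thm:lower bound} again, so equality holds.

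One technical subtlety I would address carefully: the witness $\mathbf{y}$ in the hypothesis is fixed, but $\Mod_{m',n'}(\mathbf{y})$ is a minimum over \emph{all} lifts of $\mathbf{y}(m')$; the argument above only produces one lift (namely $\mathbf{y}$ itself) achieving the bound, which together with Theorem \ref{thm:lower bound} is exactly what is needed, so no further minimization argument is required. I would also note that $\Mod_{m',n'}(\mathbf{y})$ depends only on $\mathbf{y}(m')$, which is why the statement is phrased for $\mathbf{y}$ rather than for the original $\mathbf{x}$; and that the case $n'=m'$ is trivial since $\Mod_{k,k}=0$, so we may assume $m'<n'$.

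The main obstacle I anticipate is making the concavity-plus-lower-bound squeeze fully rigorous, in particular verifying that Proposition \ref{prop:adjoint concavity} applies to the single element $\mathbf{y}$ and that the increments being non-increasing, together with their average over $[m,n']$ being forced to the minimal possible value $\chi(\g)$ (by Theorem \ref{thm:lower bound} each partial average is $\ge\chi(\g)$), genuinely pins down \emph{each} increment on $[m,n']$ to be $\chi(\g)$ — a short but slightly delicate inequality chase. Everything else is bookkeeping: unwinding the centralizer formula for $\Mod$, quoting Proposition \ref{prop:extension}, and assembling the pieces.
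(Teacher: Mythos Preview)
Your approach is essentially the paper's: use concavity of $a_k:=\dim\g_k^{\mathbf{y}(k)}$ together with the lower bound of Theorem~\ref{thm:lower bound} to squeeze every increment $a_{k+1}-a_k$ (for $k\ge m$) down to exactly $\chi(\g)$, then read off $a_{n'}-a_{m'}=(n'-m')\chi(\g)$ and conclude via Theorem~\ref{thm:lower bound}.

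There is one wobble. You invoke Proposition~\ref{prop:extension} to get $\Mod_{m,n'}(\mathbf{y})=(n'-m)\chi(\g)$ and then speak of ``the telescoped equality $a_{n'}-a_m=(n'-m)\chi(\g)$'' as if it followed. It does not: $\Mod_{m,n'}(\mathbf{y})$ is a minimum over \emph{all} lifts of $\mathbf{y}(m)$, and says nothing about the value $a_{n'}-a_m$ for the particular $\mathbf{y}$ in the hypothesis beyond the inequality $a_{n'}-a_m\ge(n'-m)\chi(\g)$. (Your later remark that ``$\mathbf{y}$ itself'' is the lift achieving the bound is precisely what has not yet been established when $n'\neq n$.) The fix is to drop the detour through Proposition~\ref{prop:extension} and work directly from the hypothesis $a_n-a_m=(n-m)\chi(\g)$: since each increment on $[m,n-1]$ is $\ge\chi(\g)$ by Theorem~\ref{thm:lower bound} and they sum to $(n-m)\chi(\g)$, all equal $\chi(\g)$; then for $k\ge n$ the increment is $\le a_n-a_{n-1}=\chi(\g)$ by concavity and $\ge\chi(\g)$ by Theorem~\ref{thm:lower bound}, hence again equal. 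This is exactly how the paper proceeds, and once it is in place the appeal to Proposition~\ref{prop:extension} becomes unnecessary.
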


Using these results, we proceed to show in Section \ref{sec:sharp} that the above lower bound is exact in a lot of cases. In particular, that Theorem \ref{thm:RT} implies
\begin{thm}
    \label{thm:reached reg}
    Let $\mathbf{x}$ be an element of $\g_\infty$ such that $x_0$ is regular, then for all pairs of natural numbers $(m,n)$ with $m\leq n$,
    \[\Mod_{m,n}(\mathbf{x})=(n-m)\chi(\g).\]
\end{thm}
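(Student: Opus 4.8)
The plan is to reduce everything to the case $(m,n)=(0,1)$ together with Theorem \ref{thm:RT}, and then invoke the hereditary Propositions \ref{prop:extension} and \ref{prop:full ext} to propagate the equality to all pairs $(m,n)$. The starting observation is that by Theorem \ref{thm:lower bound} we already have $\Mod_{m,n}(\mathbf{x})\geq (n-m)\chi(\g)$ for every pair $m\leq n$; so it suffices to produce, for a suitable single pair, an element $\mathbf{y}$ of $\g_\infty$ with $\mathbf{y}(0)=x_0$ (equivalently $\mathbf{y}$ agreeing with $\mathbf{x}$ far enough) realizing the reverse inequality, i.e. with $\dim\g_n^{\mathbf{y}(n)}$ as small as possible.

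First I would treat $(m,n)=(0,1)$. Since $x_0$ is regular in $\g$, Theorem \ref{thm:RT} says that $(x_0,x_1)$ is a regular element of $\g_1$ for \emph{any} choice of $x_1$; hence $\dim\g_1^{\mathbf{x}(1)}=\chi(\g_1)=2\chi(\g)$. On the other hand $\dim\g_0^{\mathbf{x}(0)}=\dim\g^{x_0}=\chi(\g)$ because $x_0$ is regular. Taking $\mathbf{y}=\mathbf{x}$ in the centralizer formula for $\Mod_{0,1}$ gives $\Mod_{0,1}(\mathbf{x})\leq \dim\g_1^{\mathbf{x}(1)}-\dim\g_0^{\mathbf{x}(0)}=2\chi(\g)-\chi(\g)=\chi(\g)$, and combined with the lower bound this is an equality: $\Mod_{0,1}(\mathbf{x})=\chi(\g)=(1-0)\chi(\g)$.

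Next I would apply Proposition \ref{prop:extension} with $m=0$, $n=1$: since $\Mod_{0,1}(\mathbf{x})=\chi(\g)$, we get $\Mod_{0,n'}(\mathbf{x})=n'\chi(\g)$ for all $n'$. In particular the lower bound is sharp for every pair of the form $(0,n')$. To get the general pair $(m,n)$ I would use Proposition \ref{prop:full ext}: fix $n'\geq m$ and choose $\mathbf{y}\in\g_\infty$ with $\mathbf{y}(0)=\mathbf{x}(0)$ realizing $\Mod_{0,n'}(\mathbf{x})$, i.e. with $\dim\g_{n'}^{\mathbf{y}(n')}=n'\chi(\g)+\dim\g_0^{\mathbf{x}(0)}=n'\chi(\g)+\chi(\g)$. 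Here there is a subtlety: Proposition \ref{prop:full ext} as stated gives $\Mod_{m',n'}(\mathbf{y})=(n'-m')\chi(\g)$ for the \emph{particular} witness $\mathbf{y}$, whereas we want the statement for $\mathbf{x}$ itself, i.e. for every element of $\g_\infty$ whose zeroth component is $x_0$. To close this gap I would note that $\mathbf{y}(0)=x_0$ is still regular, and that $\Mod_{m,n}(\mathbf{x})$ depends only on $\mathbf{x}(m)$; so either a direct argument using the concavity of the sequence $(\dim\g_k^{\mathbf{x}(k)})_k$ from Proposition \ref{prop:adjoint concavity} pins down $\dim\g_k^{\mathbf{x}(k)}=k\chi(\g)+\chi(\g)$ for all $k$ once it holds at $k=0$ and is bounded above correctly for large $k$, or one observes that regularity of $x_0$ forces $(x_0,\dots,x_k)$ regular in $\g_k$ by an iterated application of Theorem \ref{thm:RT}, giving $\dim\g_k^{\mathbf{x}(k)}=(k+1)\chi(\g)$ directly for \emph{all} $\mathbf{x}$ with $x_0$ regular. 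The latter route is cleanest: it yields $\dim\g_n^{\mathbf{x}(n)}-\dim\g_m^{\mathbf{x}(m)}=(n-m)\chi(\g)$ for the specific element, hence $\Mod_{m,n}(\mathbf{x})\leq (n-m)\chi(\g)$, and with Theorem \ref{thm:lower bound} we are done.

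The main obstacle I anticipate is precisely this last bookkeeping step: making sure that "regularity of $x_0$" really does force every truncation $\mathbf{x}(k)$ to be regular in $\g_k$ (so that the upper bound on centralizer dimensions is attained by $\mathbf{x}$ itself and not merely by some cleverly chosen $\mathbf{y}$), and that the passage between the ``$\Mod$ depends only on $\mathbf{x}(m)$'' remark and the hereditary propositions is logically airtight. Once the chain $\dim\g_k^{\mathbf{x}(k)}=(k+1)\chi(\g)$ is established for all $k$ via Theorem \ref{thm:RT}, everything else is immediate from Theorem \ref{thm:lower bound}; the concavity statement of Proposition \ref{prop:adjoint concavity} can serve as a sanity check or as an alternative to the iterated-Takiff argument if the latter needs justification.
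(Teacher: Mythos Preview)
Your proposal is correct, but you take a longer path than necessary and manufacture a difficulty that is not there. The paper's proof is two lines: since $x_0$ is regular, Theorem \ref{thm:RT} gives $\dim\g_1^{\mathbf{x}(1)}=2\chi(\g)=\chi(\g)+\dim\g^{x_0}$, and then one applies Proposition \ref{prop:full ext} \emph{with $\mathbf{y}=\mathbf{x}$} and $(m,n)=(0,1)$. The ``subtlety'' you flag---that Proposition \ref{prop:full ext} gives the conclusion only for the witness $\mathbf{y}$---evaporates because $\mathbf{x}$ itself satisfies the hypothesis $\dim\g_1^{\mathbf{x}(1)}=(1-0)\chi(\g)+\dim\g_0^{\mathbf{x}(0)}$; there is no need to go through Proposition \ref{prop:extension} first or to hunt for a separate witness.

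That said, your ``latter route'' is a perfectly valid alternative and is in some sense more elementary than the paper's argument: Theorem \ref{thm:RT} already tells you that $\mathbf{x}(k)$ is regular in $\g_k$ for every $k$ (since regularity depends only on $x_0$), hence $\dim\g_k^{\mathbf{x}(k)}=(k+1)\chi(\g)$ for all $k$, and the difference $\dim\g_n^{\mathbf{x}(n)}-\dim\g_m^{\mathbf{x}(m)}=(n-m)\chi(\g)$ combined with Theorem \ref{thm:lower bound} finishes the proof without invoking either hereditary proposition. The paper's route packages this through Proposition \ref{prop:full ext}, which is useful because the same mechanism is reused for the subregular case; your direct route trades that reusability for a shorter self-contained argument in the regular case.
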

In addition, we prove
\begin{prop}
\label{prop: reached subreg}
    Let $\mathbf{x}$ be an element of $\g_\infty$ such that $x_0$ is subregular, then the following are equivalent:
    \begin{enumerate}
        \item for all pairs of natural numbers $(m,n)$ with $m\leq n$ we have that $\Mod_{m,n}(\mathbf{x})=(n-m)\chi(\g)$,
        \item there exists a pair of natural numbers $(m,n)$ with $m<n$ such that $\Mod_{m,n}(\mathbf{x})=(n-m)\chi(\g)$,
        \item the centralizer $\g^{x_0}$ is not an abelian Lie algebra.
    \end{enumerate}
\end{prop}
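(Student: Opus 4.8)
The plan is to prove the cycle of implications $(1)\Rightarrow(2)\Rightarrow(3)\Rightarrow(1)$, where only the last implication requires real work; the first is trivial (take any pair $m<n$) and the third will reduce, via Propositions \ref{prop:extension} and \ref{prop:full ext}, to producing a \emph{single} good lift. Throughout I write $r:=\chi(\g)$ and, since $x_0$ is subregular, $\dim\g^{x_0}=r+2$.

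For $(2)\Rightarrow(3)$, suppose $\Mod_{m,n}(\mathbf{x})=(n-m)r$ for some $m<n$. By Proposition \ref{prop:full ext} there is $\mathbf{y}\in\g_\infty$ with $\mathbf{y}(m)=\mathbf{x}(m)$ realizing the minimum; in particular $\mathbf{y}(0)=x_0$ and, looking at the $(0,1)$-modality forced by the hereditary statement applied at the bottom level, one gets $\Mod_{0,1}(\mathbf{x})\leq r$. On the other hand I will first establish the general identity $\Mod_{0,1}(\mathbf{x})=\chi(\g^{x_0})$ (this is asserted in the introduction right before Theorem \ref{thm: 01}, so I may quote it), whence $\chi(\g^{x_0})\leq r$. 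Now if $\g^{x_0}$ were abelian then $\chi(\g^{x_0})=\dim\g^{x_0}=r+2>r$, a contradiction; hence $\g^{x_0}$ is non-abelian. (Here I use that $\Mod_{m,n}$ depends only on the orbit of $\mathbf{x}(m)$ and the concavity of $(\dim\g_k^{\mathbf{y}(k)})_k$ from Proposition \ref{prop:adjoint concavity} to push the equality down from level $(m,n)$ to level $(0,1)$; the precise bookkeeping is a short computation with the concavity inequality.)

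The heart of the argument is $(3)\Rightarrow(1)$. By Proposition \ref{prop:extension} combined with Proposition \ref{prop:full ext} it suffices to exhibit \emph{one} pair $m<n$ and one lift attaining $(n-m)r$; I will do this for $(m,n)=(0,1)$, i.e. I must find $x_1\in\g$ with $\dim\g_1^{(x_0,x_1)}=r+\dim\g^{x_0}=2r+2$. Using the quadratic structure, identify $\g_1^*$ with $\g_1$ and compute $\dim\g_1^{(x_0,x_1)}$ as the dimension of the kernel of the map $\g_1\to\g_1$ given on components by $(u_0,u_1)\mapsto([x_0,u_1]+[x_1,u_0],\,[x_0,u_0])$; its kernel always contains $\{0\}\times\g^{x_0}$, so the question is whether, for generic $x_1$, the image of $\operatorname{ad}x_1$ restricted to $\g^{x_0}$ together with $\operatorname{im}(\operatorname{ad}x_0)$ fills up a space of the right codimension — equivalently whether the induced map $\g^{x_0}\to\g^{x_0}/([\g^{x_0},\g^{x_0}])$-type cokernel is as small as possible. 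Concretely, $\dim\g_1^{(x_0,x_1)}=\dim\g^{x_0}+\dim\ker\big(\operatorname{ad}x_1\colon \g^{x_0}\to\g/[x_0,\g]\big)$ after the usual identifications, and minimizing over $x_1$ one is left to show $\min_{x_1}\dim\ker\big(\operatorname{ad}x_1|_{\g^{x_0}}\bmod [x_0,\g]\big)=r$. The point is that $[x_0,\g]$ is a codimension-$\dim\g^{x_0}$ subspace and $\operatorname{ad}x_1$ maps $\g^{x_0}$ into a complement-sized target; since $\g^{x_0}$ is non-abelian one can choose $x_1\in\g^{x_0}$ (or slightly perturbed) so that $\operatorname{ad}x_1$ acts non-trivially on $\g^{x_0}$, and a dimension count using $\dim\g^{x_0}=r+2$ shows the kernel drops exactly to $r$ — it cannot drop below $r=\chi(\g)$ by Theorem \ref{thm:lower bound}, and it is $\leq r$ as soon as $\operatorname{ad}x_1$ has rank $\geq 2$ on the relevant space, which non-abelianness of $\g^{x_0}$ guarantees (a non-abelian Lie algebra of dimension $r+2$ has an element of adjoint rank $\geq 2$). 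Finally, the converse bound $(1)\Rightarrow(2)$ is immediate, and this closes the cycle.

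\textbf{Main obstacle.} The delicate point is the last dimension count: translating "$\g^{x_0}$ non-abelian" into "there exists $x_1$ making the kernel exactly $r$" requires controlling the interaction between $\operatorname{ad}x_1$, the subspace $[x_0,\g]$, and the quadratic form used to identify $\g_1$ with $\g_1^*$. One must be careful that choosing $x_1\in\g^{x_0}$ is enough (so that $[x_1,\g^{x_0}]\subseteq\g^{x_0}$ and the reduction mod $[x_0,\g]$ behaves well) and that a rank-$2$ adjoint element of $\g^{x_0}$ suffices to kill exactly two dimensions and no more; a rank-$2$ nilpotent or semisimple element in a non-abelian Lie algebra of the given dimension does the job, but verifying that the count is exact — neither over- nor under-shooting — is where the argument has to be run carefully, using Theorem \ref{thm:lower bound} for the lower bound and an explicit $x_1$ for the upper bound.
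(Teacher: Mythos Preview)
Your proposal has two genuine gaps, one in each non-trivial implication.

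\medskip

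\textbf{Gap in $(3)\Rightarrow(1)$.} You claim it suffices to find a single good lift at level $(0,1)$, but this is not enough. The quantity $\Mod_{m,n}(\mathbf{x})$ depends on $\mathbf{x}(m)=(x_0,\ldots,x_m)$, not just on $x_0$. If you choose $y_1$ so that $\dim\g_1^{(x_0,y_1)}=\dim\g^{x_0}+\chi(\g)$ and apply Proposition~\ref{prop:full ext}, you obtain $\Mod_{m',n'}(\mathbf{y})=(n'-m')\chi(\g)$ for the lift $\mathbf{y}$, not for the original $\mathbf{x}$; and for $m'\geq 1$ there is no reason for $\mathbf{y}(m')$ and $\mathbf{x}(m')$ to lie in the same $\G_{m'}$-orbit. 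Concretely, if $x_1$ happens to satisfy $K(x_1,\cdot)|_{\g^{x_0}}=0$, then $\dim\g_1^{\mathbf{x}(1)}=2\dim\g^{x_0}$, and nothing you proved at level $(0,1)$ controls $\Mod_{1,2}(\mathbf{x})$. The paper's proof handles each $m$ on its own: either some earlier increment of $(\dim\g_k^{\mathbf{x}(k)})_k$ has already dropped to $\chi(\g)$ (and Proposition~\ref{prop:full ext} applies with that $\mathbf{x}$), or all increments up to $m$ equal $\chi(\g)+2$, in which case one constructs $y_{m+1}\notin[\g^{x_0},\g^{x_0}]^\perp$ and uses Lemma~\ref{lem: adjoint RT tech} together with parity to force $\dim\g_{m+1}^{\mathbf{y}(m+1)}-\dim\g_m^{\mathbf{x}(m)}=\chi(\g)$.

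\medskip

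\textbf{Gap in $(2)\Rightarrow(3)$.} You want to deduce $\Mod_{0,1}(\mathbf{x})\leq\chi(\g)$ from $\Mod_{m,n}(\mathbf{x})=(n-m)\chi(\g)$, but neither hereditary proposition lets you descend below level $m$: Proposition~\ref{prop:extension} fixes $m$ and varies $n'\geq m$, and Proposition~\ref{prop:full ext} gives information only for $m'\geq m$. Concavity goes the wrong way: the slopes $\dim\g_{k+1}^{\mathbf{y}(k+1)}-\dim\g_k^{\mathbf{y}(k)}$ are non-increasing, so an increment equal to $\chi(\g)$ at level $m$ tells you nothing about the increment at level $0$ being small. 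The paper instead takes the lexicographically smallest pair $(m,n)$ with the equality, reduces to $n=m+1$, shows that by minimality every earlier increment equals $\chi(\g)+2$, and then uses Lemma~\ref{lem: adjoint RT tech} at level $m$ to extract elements $z_0,z_0'\in\g^{x_0}$ with $K(x_{m+1},[z_0,z_0'])\neq 0$, forcing $[z_0,z_0']\neq 0$.

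\medskip

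Your computation that $\Mod_{0,1}(\mathbf{x})=\chi(\g^{x_0})$ and that non-abelianness plus parity plus Theorem~\ref{thm:lower bound} give $\chi(\g^{x_0})=\chi(\g)$ is correct, but it only settles the case $m=0$ of item~(1).
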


We then restrict ourselves to the case where $\g$ is reductive for which our results yields
\begin{thm}
\label{thm:red reg/sub}
    Let $\g$ be a reductive Lie algebra and $\mathbf{x}$ be an element of $\g_\infty$. If $x_0$ is regular or subregular, then for any pair of natural numbers $(m,n)$ with $m\leq n$, we have
        \[\Mod_{m,n}(\mathbf{x})=(n-m)\chi(\g).\]
\end{thm}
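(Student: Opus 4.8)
The plan is to split along the stated dichotomy. If $x_0$ is regular there is nothing to do: the conclusion is exactly Theorem~\ref{thm:reached reg}, which holds for any quadratic $\g$, hence for a reductive one. So the argument concentrates entirely on the case where $x_0$ is subregular.

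Assume then $\dim\g^{x_0}=\chi(\g)+2$. By Proposition~\ref{prop: reached subreg}, the asserted equality $\Mod_{m,n}(\mathbf{x})=(n-m)\chi(\g)$ for all $m\le n$ is equivalent to the single fact that $\g^{x_0}$ is \emph{not} an abelian Lie algebra. Thus the whole theorem reduces to the following statement: if $\g$ is reductive and $x_0\in\g$ is subregular, then $\g^{x_0}$ is non-abelian.

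I would establish this by contradiction, using two inputs. First, a general fact: for a finite-dimensional Lie algebra $\mathfrak{h}$ one has $\chi(\mathfrak{h})\le\dim\mathfrak{h}$, with equality precisely when $\mathfrak{h}$ is abelian (if $\chi(\mathfrak{h})=\dim\mathfrak{h}$ then every coadjoint stabilizer equals $\mathfrak{h}$, forcing $[\mathfrak{h},\mathfrak{h}]=0$). Second, since $\g$ is reductive, Elashvili's conjecture (Theorem~\ref{conj:Elashvili}) applies to $\g^{x_0}$ and gives $\chi(\g^{x_0})=\chi(\g)$. Now if $\g^{x_0}$ were abelian we would get $\dim\g^{x_0}=\chi(\g^{x_0})=\chi(\g)$, i.e.\ $x_0$ regular, contradicting subregularity. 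Hence $\g^{x_0}$ is non-abelian, and Proposition~\ref{prop: reached subreg} finishes the proof.

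The one step that requires a genuine idea is recognizing that Elashvili's conjecture is exactly the tool needed to forbid an abelian centralizer at a subregular point; after that the proof is pure bookkeeping. If one did not want to invoke it, one could instead pass to the Jordan decomposition $x_0=s+n$, note that $\g^s$ is reductive of the same index as $\g$ and that $n$ is subregular nilpotent in $\g^s$, and then produce two non-commuting elements of $(\g^s)^n=\g^{x_0}$ by a classification-dependent inspection of subregular nilpotent orbits; but this is both longer and less transparent, so I would keep the index argument.
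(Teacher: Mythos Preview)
Your proof is correct and follows the paper's overall strategy: the regular case is Theorem~\ref{thm:reached reg}, and the subregular case reduces via Proposition~\ref{prop: reached subreg} to the single assertion that $\g^{x_0}$ is non-abelian. The only difference lies in how this last fact is justified. The paper simply cites a structural result from \cite{TauvelYu2005} (35.6.8) stating that centralizers of subregular elements in a reductive Lie algebra are never abelian, whereas you deduce it from Elashvili's conjecture (Theorem~\ref{conj:Elashvili}): if $\g^{x_0}$ were abelian then $\chi(\g^{x_0})=\dim\g^{x_0}=\chi(\g)+2>\chi(\g)$, contradicting $\chi(\g^{x_0})=\chi(\g)$. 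Your route has the advantage of staying entirely within results already stated in the paper, at the cost of invoking a substantially deeper theorem; the cited textbook fact is more elementary and does not depend on the case-by-case work behind Elashvili's conjecture.
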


Theorem \ref{thm: 01} together with the results on heredity, allows to prove the following generalization
\begin{thm}
    \label{thm:main}
    Let $\g$ be a reductive Lie algebra and $\mathbf{x}$ an element of $\g_\infty$. For all natural numbers $n$, we have
    $$\Mod_{0,n}(\mathbf{x})=n\chi(\g).$$
\end{thm}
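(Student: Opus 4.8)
The plan is to combine Theorem \ref{thm: 01} (which identifies $\Mod_{0,1}(\mathbf{x})=\chi(\g)$ for all $\mathbf{x}$ in $\g_\infty$ when $\g$ is reductive) with the hereditary statements of Propositions \ref{prop:extension} and \ref{prop:full ext}, and argue by induction on $n$. The base case $n=0$ is trivial since $\Mod_{0,0}(\mathbf{x})=0$, and $n=1$ is exactly Theorem \ref{thm: 01}. For the inductive step, fix $\mathbf{x}$ in $\g_\infty$ and suppose $\Mod_{0,k}(\mathbf{x})=k\chi(\g)$ for all $k\leq n$; we must show $\Mod_{0,n+1}(\mathbf{x})=(n+1)\chi(\g)$. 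By Theorem \ref{thm:lower bound} the inequality $\geq$ always holds, so the content is the reverse inequality, i.e.\ producing an element $\mathbf{y}$ of $\g_\infty$ with $\mathbf{y}(0)=\mathbf{x}(0)$ and $\dim\g_{n+1}^{\mathbf{y}(n+1)}=(n+1)\chi(\g)+\dim\g_0^{x_0}$.

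The key idea is to choose a good representative at level $1$ and then propagate it upward. By Theorem \ref{thm: 01} there exists $\mathbf{y}$ in $\g_\infty$ with $\mathbf{y}(0)=\mathbf{x}(0)$ realizing $\dim\g_1^{\mathbf{y}(1)}=\chi(\g)+\dim\g_0^{x_0}$, i.e.\ $\dim\g_1^{\mathbf{y}(1)}=(1-0)\chi(\g)+\dim\g_0^{\mathbf{x}(0)}$. This is precisely the hypothesis of Proposition \ref{prop:full ext} with $(m,n)=(0,1)$: it yields $\Mod_{m',n'}(\mathbf{y})=(n'-m')\chi(\g)$ for \emph{all} pairs $(m',n')$ with $0\leq m'\leq n'$. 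In particular $\Mod_{0,n+1}(\mathbf{y})=(n+1)\chi(\g)$. Since $\Mod_{0,n+1}$ depends only on the orbit of the level-$0$ projection, and $\mathbf{y}(0)=\mathbf{x}(0)$ lies in (indeed equals) $\G_0\cdot x_0$, we conclude $\Mod_{0,n+1}(\mathbf{x})=\Mod_{0,n+1}(\mathbf{y})=(n+1)\chi(\g)$, completing the induction.

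In fact, once Proposition \ref{prop:full ext} is available the induction collapses entirely: applying Theorem \ref{thm: 01} to get a representative $\mathbf{y}$ at level $1$ and then invoking Proposition \ref{prop:full ext} with $(m,n)=(0,1)$ directly gives $\Mod_{0,n}(\mathbf{y})=n\chi(\g)$ for every $n$, hence $\Mod_{0,n}(\mathbf{x})=n\chi(\g)$ for every $n$. The main subtlety to be careful about is the reduction "it only depends on the $\G_0$-orbit of the level-$0$ projection'': one must check that replacing $\mathbf{x}$ by $\mathbf{y}$ with $\mathbf{y}(0)=\mathbf{x}(0)$ does not change $\Mod_{0,n}$, which is immediate from the centralizer formula for $\Mod_{m,n}$ since the minimum there is taken over all lifts $\mathbf{z}$ with prescribed level-$0$ part. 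No genuinely hard step remains here — the real work is in Theorem \ref{thm: 01} (equivalently Elashvili's conjecture) and in Proposition \ref{prop:full ext}, both of which we are permitted to assume.
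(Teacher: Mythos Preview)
Your argument is correct and follows essentially the same route as the paper: use Theorem~\ref{thm: 01} to get $\Mod_{0,1}(\mathbf{x})=\chi(\g)$, then invoke a hereditary proposition to extend to all $n$. The only difference is that the paper applies Proposition~\ref{prop:extension} directly to $\mathbf{x}$ (since $\Mod_{0,1}(\mathbf{x})=(1-0)\chi(\g)$ is exactly its hypothesis, one immediately gets $\Mod_{0,n'}(\mathbf{x})=n'\chi(\g)$ for every $n'$), whereas you pass to a minimizing lift $\mathbf{y}$ and invoke Proposition~\ref{prop:full ext}, then transfer back to $\mathbf{x}$ via the observation that $\Mod_{0,n}$ depends only on the level-$0$ part. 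Both are valid; the paper's route is slightly more direct and avoids the (admittedly trivial) transfer step and the unnecessary induction scaffolding you set up and then dismantle.
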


Observe that the right-hand side of the equality is independent of $\mathbf{x}$, this mirrors Elashvili's conjecture and can be seen as a slight extension of it.

Finally, we focus on the case where the Lie algebra is semisimple to further our results and give a reduction result from the Jordan decomposition of an element.

Supported by our results, we formulate the following generalization of Elashvili's conjecture

\begin{conj}
\label{conj: main}
    Let $\g$ be a reductive Lie algebra and $\mathbf{x}$ an element of $\g_\infty$. For all natural numbers $m$ and $n$ with $m\leq n$, we have
    $$\Mod_{m,n}(\mathbf{x})=(n-m)\chi(\g).$$
\end{conj}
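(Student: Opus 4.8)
The plan is to leverage the three structural facts already in hand — the concavity of $k\mapsto\dim\g_k^{\mathbf{x}(k)}$ (Proposition \ref{prop:adjoint concavity}), the lower bound (Theorem \ref{thm:lower bound}), and the hereditary Propositions \ref{prop:extension} and \ref{prop:full ext} — to collapse the conjecture to a single "one step" statement, and then to prove that statement by descent along the Jordan decomposition. First, by Theorem \ref{thm:lower bound} only the reverse inequality $\Mod_{m,n}(\mathbf{x})\leq(n-m)\chi(\g)$ needs proof. Next, by Proposition \ref{prop:extension}, for each fixed $m$ it suffices to obtain equality for a single $n>m$; taking $n=m+1$, the conjecture becomes: for every $m$ and every $\mathbf{x}\in\g_\infty$ there is $x_{m+1}\in\g$ with
\[\dim\g_{m+1}^{(x_0,\dots,x_{m+1})}=\chi(\g)+\dim\g_m^{(x_0,\dots,x_m)}.\]
Writing $\delta_k(\mathbf{y}):=\dim\g_k^{\mathbf{y}(k)}-\dim\g_{k-1}^{\mathbf{y}(k-1)}$, which is non-increasing in $k$ by Proposition \ref{prop:adjoint concavity} and bounded below by $\chi(\g)$ by the lower bound, we must find an extension of $(x_0,\dots,x_m)$ whose $(m+1)$-st increment is minimal, i.e.\ equal to $\chi(\g)$. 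Note that for the degree-$0$ truncation this is exactly what Theorem \ref{thm:main} gives: one may pick $\mathbf{z}$ with $z_0=x_0$ and $\delta_1(\mathbf{z})=\dots=\delta_{m+1}(\mathbf{z})=\chi(\g)$, and Proposition \ref{prop:full ext} then shows every truncation $\mathbf{z}(m')$ is again "good"; the remaining content is precisely that an \emph{arbitrary} $\G_m$-orbit in $\g_m$, not just one arising as such a truncation, still admits a minimal next increment. The orbits with $x_0$ regular or subregular are already settled by Theorem \ref{thm:reached reg} and Proposition \ref{prop: reached subreg}, which should serve as the base of an induction on the "complexity" of $x_0$.

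\textbf{Jordan reduction to nilpotent $x_0$.} Write $x_0=s+e$ for the Jordan decomposition in the reductive algebra $\g$, so $\g^s$ is reductive with $\dim\g^s\leq\dim\g$, equality holding exactly when $x_0=e$ is nilpotent, and $\g^{x_0}=(\g^s)^{e}$. I would apply the reduction result obtained from the Jordan decomposition (the one announced at the end of the paper) to express $\Mod_{m,m+1}^{\g}(\mathbf{x})$ as a modality in the generalized Takiff algebra of $\g^{s}$ at the element obtained by projecting the higher components $x_1,\dots,x_m$ suitably. By induction on $\dim\g$ this reduces the conjecture to the case where $x_0$ is nilpotent.

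\textbf{The nilpotent case, and the main obstacle.} For $x_0=e$ nilpotent, embed $e$ in an $\mathfrak{sl}_2$-triple and use the induced grading of $\g^e$; the goal is a second reduction identifying $\Mod_{m,m+1}^{\g}(\mathbf{x})$ with a modality computed inside the generalized Takiff algebra of the centralizer $\g^{e}$ at the residual element carrying $x_1,\dots,x_m$ — the "relative, one level higher" analogue of the equality $\Mod_{0,1}(\mathbf{x})=\chi(\g^{x_0})$ discussed before Theorem \ref{thm: 01}. Granting this, one finishes by the inductive hypothesis applied to $\g^{e}$ (of strictly smaller dimension), using Elashvili's conjecture (Theorem \ref{conj:Elashvili}) to replace $\chi(\g^{e})$ by $\chi(\g)$; to close the induction one should in fact prove the statement for the wider class of centralizers of nilpotent elements of reductive Lie algebras — the (generally non-reductive) algebras $\g_e$ for which $\chi(\g_e)$ still equals the index of the ambient reductive algebra. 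The crux is exactly this last reduction: in contrast with degree $0$, the components $x_1,\dots,x_m$ do not respect the $\mathfrak{sl}_2$-grading and genuinely couple the filtration pieces of $\g_{m+1}^{\mathbf{x}(m+1)}$, so the naive associated-graded estimate yields only an inequality. Controlling the defect — equivalently, showing that the non-increasing increment sequence $\delta_k$ cannot stall strictly above $\chi(\g)$ for any orbit — is the hard point, and in the purely nilpotent sub-case it may, as for the original Elashvili conjecture, ultimately require a case-by-case verification over the nilpotent orbits.
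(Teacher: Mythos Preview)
The statement you are trying to prove is Conjecture~\ref{conj: main}, which the paper explicitly leaves open: it is stated as a conjecture, not as a theorem, and no proof is given anywhere in the paper. The paper establishes only partial cases --- the $m=0$ case (Theorem~\ref{thm:main}, equivalent to Elashvili's conjecture), the regular and subregular cases (Theorem~\ref{thm:red reg/sub}), and the $m=1$ case when $x_0$ is semisimple --- together with the heredity and shift machinery you cite. There is therefore no ``paper's own proof'' to compare against.

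Your proposal is not a proof but a strategy sketch, and you correctly identify its own gap. The reductions you outline (to the one-step case $n=m+1$, then via Jordan decomposition to nilpotent $x_0$) are sound and are the natural moves given the paper's tools; indeed Proposition~\ref{prop: nilp red} carries out precisely the Jordan reduction you describe. But the crucial step --- the ``relative, one level higher'' analogue of $\Mod_{0,1}(\mathbf{x})=\chi(\g^{x_0})$ that would identify $\Mod_{m,m+1}(\mathbf{x})$ with a modality in $(\g^{e})_\infty$ --- is not proved in the paper and is, as you say, the hard point. Your further suggestion to enlarge the inductive class to centralizers $\g^e$ of nilpotents runs into the difficulty that such algebras are generally not quadratic, so the concavity result (Proposition~\ref{prop:adjoint concavity}) and hence the entire machinery of Theorem~\ref{thm:lower bound} and Propositions~\ref{prop:extension}, \ref{prop:full ext} is not available for them in the adjoint form used here. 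In short, your outline is a reasonable plan of attack on an open problem, but it does not close the gap, and the paper makes no claim to close it either.
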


\begin{rem}
 \label{count: quadratic}
    Conjecture \ref{conj: main} and in particular Theorem \ref{conj:Elashvili} are false if reductive is replaced by quadratic. The following construction, given in \cite{RaisTauvel1992} (4.6 Remarques.c.ii) for another purpose, yields a counter-example.
    
    Consider the exceptional simple Lie algebra $\mathfrak{s}$ of type $G_2$ together with its Chevalley basis
    $$\{h_\alpha,h_\beta,e_{\pm\alpha},e_{\pm\beta},e_{\pm(\beta+\alpha)},e_{\pm(\beta+2\alpha)},e_{\pm(\beta+3\alpha)},e_{\pm(2\beta+3\alpha)}\}.$$
    Let $\g$ be the Takiff algebra of $\mathfrak{s}$, it is quadratic as $\mathfrak{s}$ is (see Section \ref{sec:quad}). Let $x=(e_\beta+e_{\beta+3\alpha},e_{-(2\beta+3\alpha)})$, it follows from a direct computation that $\g^{x}$ is abelian of dimension $6$. Yet, we know by Theorem \ref{thm:RT} that $\chi(\g)=2\chi(\mathfrak{s})=4$ so $x$ is subregular, hence by Proposition \ref{prop: reached subreg},  $\Mod_{m,n}(\mathbf{x})=6(n-m)>\chi(\g)(n-m)$ for all $m\leq n$ and elements $\mathbf{x}$ of $\g_\infty$.
\end{rem}

Note that a lot of the results in this paper admits a coadjoint reformulation for which the quadraticity assumption can be dropped as for example Theorem \ref{thm:RT} (See Lemma \ref{lem: inter K duality}).
\hspace{0px}

We shall conserve the notations introduced above in the rest of the paper. 

\section{Concavity in a generalized Takiff algebra}
\label{sec:quad}
The goal of this section is to prove that for any element $\mathbf{x}$ of $\g_\infty$, the sequence of integers $(\dim\g_m^{\mathbf{x}(m)})_{m\in\Natural}$ is concave. Let us recall some basic properties of quadratic Lie algebras

A bilinear form $K$ on a Lie algebra $\g$ is said to be invariant if it satisfies:
\[\text{for all } x,y,z\text{ in }\g:~K([x,y],z)=K(x,[y,z]).\]

A Lie algebra is quadratic if and only if it admits a non-degenerate, symmetric, invariant bilinear form. Throughout this paper $K$ will denote a non-degenerate, symmetric, invariant bilinear form on $\g$. Let us give a few examples that we will use later on.

\begin{example}
\label{example: list}
    \begin{enumerate}
        \item A semisimple Lie algebra is quadratic because its Killing form is non-degenerate. 
        \item An abelian Lie algebra is quadratic as any non-degenerate symmetric bilinear form is invariant.
        \item The product of two quadratic Lie algebras is quadratic. In particular, a reductive Lie algebra is quadratic.
        \item A nilpotent (non-abelian) Lie algebra can be quadratic. Take $\mathfrak{n}$ to be the Lie algebra with basis $(a_1,a_2,a_3,b_1,b_2,b_3)$ and non trivial brackets $[a_i,a_j]=b_k$ for $\{i,j,k\}=\{1,2,3\}$ and $i<j$. The symmetric bilinear form $K$ on $\mathfrak{n}$ given on the basis by the non-trivial couplings $K(a_i,b_i)=1$ for $i$ in $\{1,2,3\}$ is invariant and non-degenerate.
        \item A solvable (non-nilpotent) Lie algebra can be quadratic. Take $\mathfrak{s}$ to be the Lie algebra with basis $(a_1,a_2,a_3,b)$ and non-trivial brackets $[a_1,a_2]=a_3,~[a_1,a_3]=-a_2,~[a_2,a_3]=b$. The symmetric bilinear form $K$ on $\mathfrak{s}$ given on the basis by the non-trivial couplings $K(a_1,b)=K(a_2,a_2)=K(a_3,a_3)=1$ is invariant and non-degenerate.
        \item If $\g$ is any Lie algebra (not necessarly quadratic), the semi-direct product of $\g$ by $\g^*$ with respect to the coadjoint action is quadratic as the bilinear form $K$ on $\g\ltimes\g^*$ defined by $K(x+\alpha,y+\beta)=\alpha(y)+\beta(x)$, where $a,b\in\g$ and $\alpha,\beta\in\g^*$, is invariant and non-degenerate.
    \end{enumerate}
\end{example}

The bilinear form $K$ can be extended to a symmetric non-degenerate invariant bilinear form on $\g_m$, still denoted $K$, defined as follows:
\begin{center}
$K(\mathbf{x}(m),\mathbf{y}(m)):=\sum_{k=0}^{m}{K(x_k,y_{m-k})}$ for all $\mathbf{x}(m),\mathbf{y}(m)$ in $\g_m$.
\end{center}
In particular, $\g_m$ is also quadratic.

We identify the dual $\g_m^*$ of $\g$, as a vector space with the product of $m+1$ copies of $\g^*$ as follows, for an element $\mathbf{f}(m):=(f_0,...,f_m)$ with $f_0,...,f_m\in\g^*$, we set:

\begin{center}
    $\mathbf{f}(m)(\mathbf{x}(m)):=\sum_{k=0}^{m}{f_k(x_k)}$ for all $\mathbf{x}(m)$ in $\g_m$.
\end{center}

For a Lie algebra $\mathfrak{h}$ and a linear form $\phi$ on $\mathfrak{h}$, we denote $\mathfrak{h}^{\phi}=\{x\in\mathfrak{h}~|~\phi([x,\mathfrak{h}])=0\}$ the centralizer of $\phi$ under the coadjoint action of $\mathfrak{h}$, it is a Lie subalgebra of $\mathfrak{h}$. The index of $\mathfrak{h}$ is then defined to be $\chi(\mathfrak{h}):=\min_{f\in\mathfrak{h}^*}\dim\mathfrak{h}^{f}$. When $\mathfrak{h}$ is algebraic this definition is compatible with the one given in the introduction in term of modality. If $\mathfrak{h}$ is quadratic with symmetric non-degenerate invariant bilinear form $K$, we have $\mathfrak{h}^x=\mathfrak{h}^{K(x,\cdot)}$ for all $x$ in $\mathfrak{h}$. The following lemma describes how the symmetric non-degenerate invariant bilinear form on $\g$ and $\g_m$ interact with this duality.

\begin{lem}
\label{lem: inter K duality}
    Let $\mathbf{x}$ be an element of $\g_\infty$ and $m$ a natural number. For all integers $i$ with $0\leq i\leq m$, set $f_i:=K(x_{m-i},\cdot)\in\g^*$ and $\mathbf{f}(m):=(f_0,f_1...,f_m)\in\g_m^*$, then
    \[\mathbf{f}(m)=K(\mathbf{x}(m),\cdot).\]
\end{lem}
\begin{proof}
    Let $\mathbf{y}(m)$ be an element of $\g_m$,
    \begin{align*}
        \mathbf{f}(m)(\mathbf{y}(m))&=\sum_{k=0}^{m}{f_k(y_{k})}\\
        &=\sum_{k=0}^{m}{K(x_{m-k},y_{k})}\\
        &=K((x_0,...,x_m),(y_0,...,y_m))\\
        &=K(\mathbf{x}(m),\mathbf{y}(m)).
    \end{align*}
    The result follows as this equality holds for an arbitrary $\mathbf{y}(m)\in\g_m$.
\end{proof}

Lemma \ref{lem: inter K duality} together with Lemme 2.7 \cite{RaisTauvel1992} yields

\begin{lem}
\label{lem: adjoint RT tech}
    Let $\mathbf{x}$ be an element of $\g_\infty$ and $m$ be an integer with $m\geq2$. Let $\mathbf{x}'$ be the element of $\g_\infty$ satisfying that for all natural numbers $i$, $x'_i=(i+1)x_{i+1}$, then
    \[\dim\g_m^{\mathbf{x}(m)}=\dim\g_{m-1}^{\mathbf{x}(m-1)}+\dim((\g_{m-1}^{\mathbf{x}(m-1)})^{\mathbf{h}(m-1)})-\dim\g_{m-2}^{\mathbf{x}(m-2)},\]
    where $\mathbf{h}(m-1)$ is the restriction to ${\g_{m-1}^{\mathbf{x}(m-1)}}$ of $K(\mathbf{x'}(m-1),\cdot)$.
\end{lem}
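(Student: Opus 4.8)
The plan is to reduce the statement to the cited result Lemme 2.7 of \cite{RaisTauvel1992} by carefully translating between the coadjoint centralizer used there and the adjoint centralizer appearing here, using the quadratic structure and Lemma \ref{lem: inter K duality}. First I would recall the form of Lemme 2.7: given a linear form $\mathbf{g}(m) = (g_0,\dots,g_m)$ on $\g_m$, it expresses $\dim \g_m^{\mathbf{g}(m)}$ in terms of the index of a ``derived'' form on the coadjoint stabilizer $\g_{m-1}^{\mathbf{g}(m-1)}$, together with $\dim\g_{m-1}^{\mathbf{g}(m-1)}$ and $\dim\g_{m-2}^{\mathbf{g}(m-2)}$. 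The key point is that the shift operator $T \mapsto$ differentiation, i.e. the map sending $\mathbf{x}$ to the $\mathbf{x}'$ with $x_i' = (i+1)x_{i+1}$, appears there precisely because the stabilizer of a form on $\g_m$ decomposes via the short exact sequence relating $\g_m$, $\g_{m-1}$ and the abelian ideal $\g\otimes T^m$.

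Next I would set $f_i := K(x_{m-i},\cdot)$ and $\mathbf{f}(m) := (f_0,\dots,f_m)$, so that by Lemma \ref{lem: inter K duality} we have $\mathbf{f}(m) = K(\mathbf{x}(m),\cdot)$, hence $\g_m^{\mathbf{x}(m)} = \g_m^{\mathbf{f}(m)}$ by quadraticity. Crucially, the truncations behave well: $\mathbf{f}(m-1)$, in the sense of the $(m-1)$-version of the construction in Lemma \ref{lem: inter K duality} applied to $\mathbf{x}$, equals $K(\mathbf{x}(m-1),\cdot)$, and similarly at level $m-2$; one just has to check the reindexing $f_i = K(x_{m-i},\cdot)$ is compatible with dropping the top component — this is the small bookkeeping step, since at level $m-1$ one wants $K(x_{(m-1)-i},\cdot)$, so the forms $g_i$ appearing in Lemme 2.7 at level $m-1$ are the $K(x_{m-1-i},\cdot)$. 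Then I would identify the ``derived form'' in Lemme 2.7: it is the restriction to $\g_{m-1}^{\mathbf{g}(m-1)}$ of a form built from $g_m$ twisted by the differentiation operator, and unwinding the definitions together with Lemma \ref{lem: inter K duality} applied to $\mathbf{x}'$ shows this is exactly $\mathbf{h}(m-1) = K(\mathbf{x}'(m-1),\cdot)|_{\g_{m-1}^{\mathbf{x}(m-1)}}$. Plugging these identifications into the formula of Lemme 2.7, and using $\g_{m-1}^{\mathbf{x}(m-1)} = \g_{m-1}^{\mathbf{f}(m-1)}$ once more to rewrite the index term as $\dim((\g_{m-1}^{\mathbf{x}(m-1)})^{\mathbf{h}(m-1)})$, gives the claimed identity.

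I expect the main obstacle to be purely notational: matching the precise conventions of Raïs--Tauvel's Lemme 2.7 — in particular the exact normalization of the differentiation/shift operator (the factors $(i+1)$), which component of the form plays the role of the ``new'' piece, and the direction of the reindexing $f_i = K(x_{m-i},\cdot)$ versus the order in which Raïs--Tauvel list the components of a form on $\g_m$. One must be scrupulous that the truncation $\mathbf{f}(m) \mapsto \mathbf{f}(m-1)$ on the coadjoint side corresponds to $\mathbf{x}(m)\mapsto \mathbf{x}(m-1)$ on the adjoint side under the dictionary of Lemma \ref{lem: inter K duality}; because of the reversal $x_{m-i} \leftrightarrow f_i$, dropping $f_m$ corresponds to dropping $x_0$, not $x_m$, so one should instead observe that the relevant level-$(m-1)$ data is obtained by the same recipe applied to $\mathbf{x}$ but with $m$ replaced by $m-1$, and verify this is consistent. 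Once the dictionary is pinned down, the proof is a one-line substitution into Lemme 2.7, and the hypothesis $m \geq 2$ is exactly what is needed for the level-$(m-2)$ term to make sense.
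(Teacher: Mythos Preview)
Your proposal is correct and follows exactly the paper's route: the paper's proof of this lemma is literally the single sentence ``Lemma~\ref{lem: inter K duality} together with Lemme~2.7 \cite{RaisTauvel1992} yields'', and what you have written is a fleshed-out account of that translation. Your discussion of the reindexing subtlety (that under $f_i = K(x_{m-i},\cdot)$ the level-$(m-1)$ dictionary must be reapplied with $m$ replaced by $m-1$ rather than obtained by dropping $f_m$) is the only nontrivial bookkeeping point, and you have identified it correctly; once checked, the substitution into Ra\"is--Tauvel's formula is indeed immediate.
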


\subsection{Concavity of the dimension of centralizers in a generalized Takiff algebra}

Recall that a sequence of real numbers $(u_m)_{m\in\Natural}$ is said to be concave if for all positive integer $m$, $u_{m+2}+u_{m}\leq2u_{m+1}$.

We shall make extensive use of the following discrete version of the ``Slopes lemma'' for concave sequences.
\begin{lem}[Slopes lemma]
    Let $(u_n)_{n\in\Natural}$ be a concave sequence of real numbers and $l,m,n$ natural numbers with $l< m< n$, then
    $$\frac{u_n-u_m}{n-m}\leq\frac{u_n-u_l}{n-l}\leq\frac{u_m-u_l}{m-l}.$$
\end{lem}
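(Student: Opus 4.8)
The plan is to reduce everything to the defining inequality $u_{n+2} + u_n \le 2u_{n+1}$, which says precisely that the sequence of consecutive slopes $d_n := u_{n+1} - u_n$ is non-increasing: $d_{n+1} \le d_n$ for all $n$. I would first record this reformulation, since it is the only structural fact available and the whole lemma is a statement about slopes.

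Next I would observe that the quantity $\frac{u_b - u_a}{b-a}$ for $a < b$ is the arithmetic mean of the consecutive slopes $d_a, d_{a+1}, \dots, d_{b-1}$, namely
\[
\frac{u_b - u_a}{b-a} = \frac{1}{b-a}\sum_{k=a}^{b-1} d_k.
\]
The key elementary fact is then: if $(d_k)$ is non-increasing, the average of an initial block of terms is at least the average of a later block, and more precisely, averaging over a larger window starting from the same point can only decrease the mean, while averaging over a window that has been shifted to the right can only decrease it as well. Concretely, for $l < m < n$ the three averages in question are over the index sets $\{l,\dots,n-1\}$, $\{l,\dots,m-1\}$ and $\{m,\dots,n-1\}$; since every element of the last block is $\le$ every element of the middle block (as $d$ is non-increasing and the blocks are ordered), and the full block $\{l,\dots,n-1\}$ is the disjoint union of the middle and last blocks, its average is a weighted average of the two block-averages, hence lies between them. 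This gives $\frac{u_n - u_m}{n-m} \le \frac{u_n - u_l}{n-l} \le \frac{u_m - u_l}{m-l}$ directly.

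Alternatively, to avoid summation bookkeeping entirely, I would prove the two inequalities separately by a telescoping/induction argument. For the right-hand inequality, $\frac{u_n - u_l}{n-l} \le \frac{u_m - u_l}{m-l}$, cross-multiplying reduces it to $(m-l)(u_n - u_l) \le (n-l)(u_m - u_l)$, i.e. after rearranging, to $\frac{u_n - u_m}{n - m} \le \frac{u_m - u_l}{m-l}$; so the two inequalities are in fact equivalent to the single ``midpoint'' statement $\frac{u_n - u_m}{n-m} \le \frac{u_m - u_l}{m-l}$, which one can prove by induction on $n - l$ using $d_{m} \le d_{m-1} \le \dots$ — the base case $n = m+1$, $l = m-1$ is exactly the concavity inequality, and the inductive step peels off one slope from either end.

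I do not anticipate a genuine obstacle here; this is a standard fact. The only point requiring mild care is the bookkeeping with half-open index blocks (making sure the block $\{l,\dots,n-1\}$ really is the disjoint union of $\{l,\dots,m-1\}$ and $\{m,\dots,n-1\}$, which uses $l < m < n$), and checking that all denominators $n-m$, $n-l$, $m-l$ are positive so that cross-multiplication preserves inequalities — both are immediate from $l < m < n$.
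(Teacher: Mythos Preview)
Your argument is correct and both approaches you outline work. The paper, however, does not prove this lemma at all: it merely states it as a well-known discrete analogue of the slopes lemma for concave functions and then uses it. So there is nothing to compare against; your proof simply supplies what the paper omits.
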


The main result of the section then follows almost immediately from Lemma \ref{lem: adjoint RT tech}.

\begin{prop}
\label{prop:adjoint concavity}
    For all elements $\mathbf{x}$ of $\g_\infty$, the sequence $(\dim\g_m^{\mathbf{x}(m)})_{m\in\Natural}$ is concave.
\end{prop}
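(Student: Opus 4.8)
The plan is to prove concavity by induction on $m$ using the recursion of Lemma \ref{lem: adjoint RT tech}. Fix $\mathbf{x}$ in $\g_\infty$ and write $d_m := \dim\g_m^{\mathbf{x}(m)}$; we must show $d_{m+2} + d_m \leq 2d_{m+1}$ for all $m$. For $m \geq 0$ the recursion (applied at index $m+2 \geq 2$) gives
\[
d_{m+2} = d_{m+1} + \dim\bigl((\g_{m+1}^{\mathbf{x}(m+1)})^{\mathbf{h}(m+1)}\bigr) - d_m,
\]
where $\mathbf{h}(m+1)$ is the restriction to $\g_{m+1}^{\mathbf{x}(m+1)}$ of the linear form $K(\mathbf{x}'(m+1),\cdot)$. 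Rearranging, the desired inequality $d_{m+2} - d_{m+1} \leq d_{m+1} - d_m$ is equivalent to
\[
\dim\bigl((\g_{m+1}^{\mathbf{x}(m+1)})^{\mathbf{h}(m+1)}\bigr) \leq 2\bigl(d_{m+1} - d_m\bigr) = 2\bigl(d_{m+1} - d_m\bigr).
\]
So everything reduces to bounding the dimension of the coadjoint centralizer of a single linear form on the Lie algebra $\g_{m+1}^{\mathbf{x}(m+1)}$.

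The key observation is that this centralizer dimension is controlled by the index of $\g_{m+1}^{\mathbf{x}(m+1)}$: for any Lie algebra $\mathfrak{h}$ and any $\phi \in \mathfrak{h}^*$ one has $\dim \mathfrak{h}^\phi \geq \chi(\mathfrak{h})$, and since adjoint (equivalently, in the algebraic case, coadjoint) centralizer dimensions all have the same parity, in fact $\dim\mathfrak{h}^\phi \in \{\chi(\mathfrak{h}), \chi(\mathfrak{h}){+}2,\dots\}$; more to the point, $\dim\mathfrak{h}^\phi \leq \dim\mathfrak{h}$ trivially, but we need the sharper $\dim\mathfrak{h}^\phi \leq 2\dim\mathfrak{h} - \dim\mathfrak{h}^\psi$ for a generic $\psi$—i.e. we want to feed in $\mathfrak{h} = \g_{m+1}^{\mathbf{x}(m+1)}$ and exploit that $d_{m+1} - d_m$ already appeared as $\dim((\g_m^{\mathbf{x}(m)})^{\mathbf{h}(m)}) - \bigl(d_m - d_{m-1}\bigr)$-type data. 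Concretely I would run the recursion one step down: applying Lemma \ref{lem: adjoint RT tech} at index $m+1$ gives $d_{m+1} - d_m = \dim((\g_m^{\mathbf{x}(m)})^{\mathbf{h}(m)}) - d_{m-1} + \dim\g_{m-1}^{\ldots}$—this is getting circular, so instead the clean route is: the quantity $\dim((\g_{m+1}^{\mathbf{x}(m+1)})^{\mathbf{h}(m+1)})$ is a centralizer of ONE element in a Lie algebra of dimension $d_{m+1}$, hence is at most $d_{m+1}$, but we also have from Lemma \ref{lem: adjoint RT tech} itself (rearranged at index $m+1$) that $d_m - d_{m-1} = \dim((\g_m^{\mathbf{x}(m)})^{\mathbf{h}(m)}) - (d_m - d_{m-1})$, forcing $2(d_m - d_{m-1}) = \dim((\g_m^{\mathbf{x}(m)})^{\mathbf{h}(m)})$. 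The honest statement is: \textbf{the recursion alone, with no induction, already gives} $2(d_{m+1} - d_m) = \dim((\g_{m+1}^{\mathbf{x}(m+1)})^{\mathbf{h}(m+1)}) + (d_{m+1} - d_m) - (d_m - d_{m+1})$... I would therefore set $a_m := d_m - d_{m-1}$ and $c_m := \dim((\g_m^{\mathbf{x}(m)})^{\mathbf{h}(m)})$, so Lemma \ref{lem: adjoint RT tech} reads $a_{m} - a_{m-1} = c_m - 2a_{m-1}$ wait—$d_m = d_{m-1} + c_{m-1} - d_{m-2}$ means $a_m = c_{m-1} - a_{m-1}$, i.e. $a_m + a_{m-1} = c_{m-1} = \dim((\g_{m-1}^{\mathbf{x}(m-1)})^{\mathbf{h}(m-1)})$. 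Concavity $a_{m+1} \leq a_m$ then becomes $a_{m+1} \leq a_m \iff c_m - a_m \leq a_m \iff c_m \leq 2a_m$, i.e.
\[
\dim\bigl((\g_m^{\mathbf{x}(m)})^{\mathbf{h}(m)}\bigr) \;\leq\; 2\bigl(\dim\g_m^{\mathbf{x}(m)} - \dim\g_{m-1}^{\mathbf{x}(m-1)}\bigr) \;=\; 2(a_m + a_{m-1}) - 2a_{m-1} \;=\; 2c_{m-1} - 2a_{m-1},
\]
so we need $c_m + 2a_{m-1} \leq 2c_{m-1}$. This is where the real content sits.

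The main obstacle, therefore, is the inequality $\dim\bigl((\g_m^{\mathbf{x}(m)})^{\mathbf{h}(m)}\bigr) \leq 2\bigl(\dim\g_m^{\mathbf{x}(m)} - \dim\g_{m-1}^{\mathbf{x}(m-1)}\bigr)$. I expect this follows from the structure of the situation in Lemma \ref{lem: adjoint RT tech}: the linear form $\mathbf{h}(m)$ on $\g_m^{\mathbf{x}(m)}$ is not arbitrary — it is the restriction of $K(\mathbf{x}'(m),\cdot)$, and there should be a natural surjection or fibration relating $\g_m^{\mathbf{x}(m)}$, $\g_{m-1}^{\mathbf{x}(m-1)}$ and the kernel on which $\mathbf{h}(m)$ is supported, so that the codimension of the centralizer of $\mathbf{h}(m)$ inside $\g_m^{\mathbf{x}(m)}$ is controlled by twice the "new part" $d_m - d_{m-1}$. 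Concretely, I would examine the projection $\g_m^{\mathbf{x}(m)} \to \g_{m-1}^{\mathbf{x}(m-1)}$ induced by $\pi_{m,m-1}$: its kernel $\mathfrak{k}$ sits inside $\g\otimes T^m$, the form $\mathbf{h}(m)$ pairs $\mathfrak{k}$ with the top-degree part, and a dimension count on the induced skew form should yield that the radical of $\mathbf{h}(m)$ on $\g_m^{\mathbf{x}(m)}$ has codimension at most $2\dim(\mathrm{image}\text{ of the pairing}) \leq 2(d_m - d_{m-1})$. Once this linear-algebra estimate is in hand, the concavity $a_{m+1} \leq a_m$ drops out immediately from the rearrangement above for all $m \geq 2$; the remaining base cases $a_1 \leq a_0$ and $a_2 \leq a_1$ (where Lemma \ref{lem: adjoint RT tech} does not directly apply since it needs $m\geq 2$) would be checked separately, presumably using $d_0 = \chi(\g)$-type facts and a direct description of $\g_1^{\mathbf{x}(1)}$.
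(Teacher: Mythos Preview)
You made an arithmetic slip in the very first rearrangement, and it sent you down a long and unnecessary detour. From the recursion
\[
d_{m+2} \;=\; d_{m+1} + \dim\bigl((\g_{m+1}^{\mathbf{x}(m+1)})^{\mathbf{h}(m+1)}\bigr) - d_m,
\]
the concavity inequality $d_{m+2} + d_m \le 2d_{m+1}$ is equivalent to
\[
\dim\bigl((\g_{m+1}^{\mathbf{x}(m+1)})^{\mathbf{h}(m+1)}\bigr) \;\le\; d_{m+1} \;=\; \dim \g_{m+1}^{\mathbf{x}(m+1)},
\]
not to $\dim\bigl((\g_{m+1}^{\mathbf{x}(m+1)})^{\mathbf{h}(m+1)}\bigr) \le 2(d_{m+1}-d_m)$ as you wrote. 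The correct inequality is the trivial one: the coadjoint centralizer of a linear form on a Lie algebra is a subspace of that Lie algebra. This is exactly how the paper proves it, in one line.

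Everything after your rearrangement is therefore superfluous. There is no need for induction, no need to introduce the $a_m$ and $c_m$ variables, no need for any ``linear-algebra estimate'' on the skew form, and no base cases to check: applying Lemma~\ref{lem: adjoint RT tech} at each index $m\ge 2$ already gives $d_m + d_{m-2} \le 2d_{m-1}$ for all $m\ge 2$, which is precisely the concavity of $(d_m)_{m\in\Natural}$. Your later claim that $c_m \le 2(d_m - d_{m-1})$ would need a separate argument is a red herring created by the arithmetic error; that inequality is neither needed nor obviously true.
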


\begin{proof}
    Let $\mathbf{x}$ be an element of $\g_\infty$ and $m$ an integer with $m\geq 2$, Lemma \ref{lem: adjoint RT tech} states that for a certain linear form $\eta$ on $\g_{m-1}^{\mathbf{x}(m-1)}$, we have
     \[\dim\g_m^{\mathbf{x}(m)}=\dim\g_{m-1}^{\mathbf{x}(m-1)}+\dim((\g_{m-1}^{\mathbf{x}(m-1)})^{\eta})-\dim\g_{m-2}^{\mathbf{x}(m-2)}.\]
    It follows from the inclusion $(\g_{m-1}^{\mathbf{x}(m-1)})^{\eta}\subset\g_{m-1}^{\mathbf{x}(m-1)}$ that
    \[\dim\g_m^{\mathbf{x}(m)}\leq2\dim\g_{m-1}^{\mathbf{x}(m-1)}-\dim\g_{m-2}^{\mathbf{x}(m-2)}.\]
    As the results holds for any $m\geq2$, concavity of the sequence $(\dim\g_m^{\mathbf{x}(m)})_{m\in\Natural}$ ensues.
\end{proof}
In addition, let us prove an ``$m=1$'' version of Lemma \ref{lem: adjoint RT tech} as it will be of interest in later sections.
\begin{lem}
\label{lem:m=1}
    Let $(x_0,x_1)$ be an element of $\g_1$, then
    \[\dim\g_1^{(x_0,x_1)}=\dim\g^{x_0}+\dim(\g^{x_0})^{K(x_1,\cdot)|_{{\g^{x_0}}}}.\]
    In particular: $\dim\g_1^{(x_0,x_1)}\leq2\dim\g^{x_0}$.
\end{lem}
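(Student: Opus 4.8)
The plan is to derive this as the base case companion to Lemma \ref{lem: adjoint RT tech}, using the same technology from Raïs--Tauvel (Lemme 2.7) specialized to $m=1$, rather than re-running the concavity argument. First I would set up the explicit description of the centralizer $\g_1^{(x_0,x_1)}$: an element $(y_0,y_1)$ of $\g_1$ centralizes $(x_0,x_1)$ precisely when $[x_0,y_0]=0$ and $[x_0,y_1]+[x_1,y_0]=0$. The first equation says $y_0\in\g^{x_0}$. Given such a $y_0$, the second equation is solvable for $y_1$ if and only if $[x_1,y_0]\in\operatorname{im}(\operatorname{ad} x_0)$, and when it is solvable the space of solutions $y_1$ is a coset of $\g^{x_0}$. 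So $\dim\g_1^{(x_0,x_1)}=\dim V + \dim\g^{x_0}$, where $V:=\{y_0\in\g^{x_0}\mid [x_1,y_0]\in\operatorname{im}(\operatorname{ad} x_0)\}$.

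The remaining step is to identify $V$ with $(\g^{x_0})^{K(x_1,\cdot)|_{\g^{x_0}}}$. Here I would use quadraticity: since $K$ is non-degenerate and invariant, $\operatorname{im}(\operatorname{ad} x_0)=(\g^{x_0})^{\perp}$ with respect to $K$ (this is the standard fact $\operatorname{im}(\operatorname{ad} x_0)^{\perp}=\ker(\operatorname{ad} x_0)=\g^{x_0}$, combined with non-degeneracy). Hence for $y_0\in\g^{x_0}$ the condition $[x_1,y_0]\in\operatorname{im}(\operatorname{ad} x_0)$ becomes $K([x_1,y_0],\g^{x_0})=0$, i.e. $K(x_1,[y_0,\g^{x_0}])=0$ by invariance, which is exactly the statement that $y_0$ lies in the centralizer of the linear form $K(x_1,\cdot)|_{\g^{x_0}}$ for the coadjoint action of the Lie algebra $\g^{x_0}$. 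This gives the displayed formula. The final inequality $\dim\g_1^{(x_0,x_1)}\leq 2\dim\g^{x_0}$ is then immediate from the inclusion $(\g^{x_0})^{K(x_1,\cdot)|_{\g^{x_0}}}\subset\g^{x_0}$, matching the pattern used in the proof of Proposition \ref{prop:adjoint concavity}.

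Alternatively, and perhaps more in the spirit of the surrounding text, one can invoke Lemma \ref{lem: inter K duality} together with Raïs--Tauvel's Lemme 2.7 directly: translating everything through the $K$-duality to the coadjoint picture, the centralizer of $(f_0,f_1)=(K(x_1,\cdot),K(x_0,\cdot))$ in $\g_1^*$ decomposes so that its dimension is $\dim\g^{f_1}+\dim(\g^{f_1})^{\bar f_0}$ where $\bar f_0$ is the restriction of $f_0$ to $\g^{f_1}$, and then reinterpreting via $K$ gives the claim. The main obstacle, such as it is, is purely bookkeeping: making sure the roles of $x_0$ and $x_1$ (equivalently, the reversal of indices in Lemma \ref{lem: inter K duality}) are tracked correctly, and checking that the ``solvability plus coset'' dimension count for $y_1$ is clean (this uses only linear algebra, since $\operatorname{ad} x_0$ restricted appropriately has kernel and image of complementary dimension). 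No genuinely hard step arises because $m=1$ avoids the recursive structure that makes the general lemma delicate.
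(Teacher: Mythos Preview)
Your proof is correct and follows essentially the same approach as the paper: both compute the centralizer equations explicitly, identify the solvability condition for $y_1$ as $[x_1,y_0]\in[x_0,\g]$, use quadraticity to rewrite $[x_0,\g]=(\g^{x_0})^{\perp}$, and then invoke invariance of $K$ to recognize the resulting condition on $y_0$ as membership in $(\g^{x_0})^{K(x_1,\cdot)|_{\g^{x_0}}}$. The alternative route you sketch via Lemma \ref{lem: inter K duality} and Ra\"is--Tauvel's Lemme 2.7 is not the one the paper takes here, but your primary argument matches the paper's proof step for step.
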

\begin{proof}
    First, note that $(y_0,y_1)$ is in $\g_1^{(x_0,x_1)}$ if and only if $[x_0,y_0]=0$ and $[x_0,y_1]+[x_1,y_0]=0$. Fixing $y_0$ an element of $\g$, we deduce that
    there exists $y_1$ in $\g$ such that $(y_0,y_1)$ is in $\g_1^{(x_0,x_1)}$ if and only if $y_0$ is in $\g^{x_0}$ and $[x_1,y_0]$ is in $[x_0,\g]$.

    By invariance of $K$, we have that $[x_0,\g]$ is a subspace of the orthogonal $(\g^{x_0})^\perp$ of $\g^{x_0}$ with respect to $K$. By  non-degeneracy of $K$, we have that $\dim(\g^{x_0})^\perp=\dim\g-\dim\g^{x_0}=\dim[x_0,\g]$, thus the inclusion is an equality. Hence there exists $y_1$ in $\g$ such that $(y_0,y_1)$ is in $\g_1^{(x_0,x_1)}$ if and only if $ y_0$ is in $\g^{x_0}$ and $K([x_1,y_0],\g^{x_0})=0$. By invariance of $K$, the last condition is equivalent to $K(x_1,[y_0,\g^{x_0}])=0$. Hence there exists $y_1$ in $\g$ such that $(y_0,y_1)$ is in $\g_1^{(x_0,x_1)}$ if and only if $y_0$ is in $(\g^{x_0})^{K(x_1,\cdot)|_{{\g^{x_0}}}}$.

    We conclude by observing that if the space $\{y_1\in\g~|~\ (y_0,y_1)\in\g_1^{(x_0,x_1)}\}$ is non-empty, it is of dimension $\dim\g^{x_0}$.
\end{proof}

\section{Lower bound and heredity results}

\subsection{Lower bound for $(m,n)$-modality}
In this subsection, we shall prove Theorem \ref{thm:lower bound} announced in the introduction using Theorem \ref{thm:RT} together with the concavity proven in Proposition \ref{prop:adjoint concavity}.
\begin{proof}[Proof of Theorem \ref{thm:lower bound}]
    Assume that $\Mod_{m,n}(\mathbf{x})<(n-m)\chi(\g)$, necessarily $m<n$, and let $\mathbf{y}$ be an element of $\g_\infty$ such that $\mathbf{y}(m)=\mathbf{x}(m)$ and $\dim\g_n^{\mathbf{y}(n)}=\dim\g_m^{\mathbf{x}(m)}+\Mod_{m,n}(\mathbf{x})$. 

    By Proposition \ref{prop:adjoint concavity} the sequence $(\dim\g_n^{\mathbf{y}(n)})_{n\in\Natural}$ is concave, as such we can apply the ``Slopes Lemma'' to obtain that for all $n'>n,$
    $$\dim\g_{n'}^{\mathbf{y}(n')}\leq\dim\g_n^{\mathbf{y}(n)}+(n'-n)\frac{\dim\g_n^{\mathbf{y}(n)}-\dim\g_m^{\mathbf{y}(m)}}{n-m}.$$
    This, by our choice of $\mathbf{y}$, implies that
    $$\dim\g_{n'}^{\mathbf{y}(n')}\leq\dim\g_n^{\mathbf{y}(n)}+(n'-n)\frac{\Mod_{m,n}(\mathbf{x})}{n-m}.$$
    As a function of $n'$, the right-hand side is affine with slope $\frac{\Mod_{m,n}(\mathbf{x})}{n-m}<\chi(\g)$ by our assumption. Thus, for $n'$ big enough, we would eventually have $\dim\g_{n'}^{\mathbf{y}(n')}<(n'+1)\chi(\g)$, which is contradictory to Theorem \ref{thm:RT}. Hence, the inequality follows.
\end{proof}

\subsection{Heredity properties}
We shall now prove Propositions \ref{prop:extension} and \ref{prop:full ext} using this lower bound.

\begin{proof}[Proof of Proposition \ref{prop:extension}]

    Let $n'$ be a natural number greater or equal to $m$ and $\mathbf{y}$ be an element of $\g_\infty$ such that $\mathbf{y}(m)=\mathbf{x}(m)$ and $\dim\g_n^{\mathbf{y}(n)}=\Mod_{m,n}(\mathbf{x})+\dim\g_m^{\mathbf{x}(m)}$.
    As the case where $n=n'$ is trivial, the proof splits into two cases $n'<n$ and $n<n'$.
    \paragraph{Case $n'<n$} 
    By Proposition \ref{prop:adjoint concavity}, we have that $(\dim\g_k^{\mathbf{y}(k)})_{k\in\Natural}$ is concave. The ``Slopes lemma'' then implies that
    $$\frac{\dim\g_{n}^{\mathbf{y}(n)}-\dim\g_{n'}^{\mathbf{y}(n')}}{n-n'}\leq\frac{\dim\g_{n}^{\mathbf{y}(n)}-\dim\g_{m}^{\mathbf{y}(m)}}{n-m}.$$
    Therefore we have $\Mod_{n',n}(\mathbf{y})\leq\dim\g_{n}^{\mathbf{y}(n)}-\dim\g_{n'}^{\mathbf{y}(n')}\leq(n-n')\chi(\g)$.
    Together with Theorem \ref{thm:lower bound}, we have the equality $\dim\g_{n}^{\mathbf{y}(n)}-\dim\g_{n'}^{\mathbf{y}(n')}=(n-n')\chi(\g)$. Since $\mathbf{x}(m)=\mathbf{y}(m)$, we have
    \begin{align*}
        \Mod_{m,n'}(\mathbf{x})&\leq\dim\g_{n'}^{\mathbf{y}(n')}-\dim\g_{m}^{\mathbf{y}(m)}\\
        &=(\dim\g_{n}^{\mathbf{y}(n)}-\dim\g_{m}^{\mathbf{y}(m)})-(\dim\g_{n}^{\mathbf{y}(n)}-\dim\g_{n'}^{\mathbf{y}(n')})\\
        &=(n'-m)\chi(\g).
    \end{align*}
     Again, using Theorem \ref{thm:lower bound}, we obtain $\Mod_{m,n'}(\mathbf{x})=(n'-m)\chi(\g).$
    \paragraph{Case $n<n'$} By Proposition \ref{prop:adjoint concavity}, we have that $(\dim\g_k^{\mathbf{y}(k)})_{k\in\Natural}$ is concave. By the ``Slopes lemma'' together with the fact that $\mathbf{x}(m)=\mathbf{y}(m)$, this implies that
    $$\frac{\dim\g_{n'}^{\mathbf{y}(n')}-\dim\g_{m}^{\mathbf{y}(m)}}{n'-m}\leq\frac{\dim\g_{n}^{\mathbf{y}(n)}-\dim\g_{m}^{\mathbf{y}(m)}}{n-m}=\chi(\g)$$
    Hence,
    $$\Mod_{m,n'}(\mathbf{x})\leq\dim\g_{n'}^{\mathbf{y}(n')}-\dim\g_{m}^{\mathbf{y}(m)}\leq(n'-m)\chi(\g).$$
    Together with Theorem \ref{thm:lower bound} this yields $\Mod_{m,n'}(\mathbf{x})=(n'-m)\chi(\g).$   
\end{proof}
\begin{rem}
    It follows from Proposition 1.8 that if we fix a positive integer $m$ and an element $\mathbf{x}$ in $\mathfrak{g}_{\infty}$, then $\Mod_{m,m+1}(\mathbf{x})=\chi(\mathfrak{g})$ if and only if $\Mod_{m,n}(\mathbf{x})=(m-n)\chi(\mathfrak{g})$ for all $n > m$. Thus to prove Conjecture 1.5, it suffices to prove $\Mod_{m,m+1} (\mathbf{x}) = \chi (\mathfrak{g})$ for all $m$.
\end{rem}

In addition to Proposition \ref{prop:extension} proving that, given $\mathbf{x}$ in $\g_\infty$, $\Mod_{m,n}(\mathbf{x})=(n-m)\chi(\g)$ is an hereditary property on $n$, we prove Proposition \ref{prop:full ext} that shows that if $\Mod_{m,n}(\mathbf{x})=(n-m)\chi(\g)$ then, $\Mod_{m,n}(\mathbf{y})$ also attains the lower bound for a larger class of elements $\mathbf{y}$ in $\g_\infty$.

\begin{proof}[Proof of Proposition \ref{prop:full ext}]
    Fix $\mathbf{y}$ in $\g_\infty$ verifying the hypotheses. The core of the proof is to show that $\dim\g_k^{\mathbf{y}(k)}=\dim\g_m^{\mathbf{y}(m)}+(k-m)\chi(\g)$ for all $k\geq m$.
    
    First, suppose that for $k=m+1$. By Proposition \ref{prop:adjoint concavity}, the sequence $(\dim\g_l^{\mathbf{y}(l)})_{l\in\Natural}$ is concave. Hence by the ``Slopes lemma''
    \[\frac{\dim\g_n^{\mathbf{y}(n)}-\dim\g_{m+1}^{\mathbf{y}(m+1)}}{n-(m+1)}\leq\frac{\dim\g_n^{\mathbf{y}(n)}-\dim\g_{m}^{\mathbf{y}(m)}}{n-m}=\chi(\g).\]
    Now as $\mathbf{y}(n)$ is in $\{\mathbf{z}(n)\in\g_n~|~\mathbf{z}(m+1)=\mathbf{y}(m+1)\}$, we have
    \[\Mod_{m+1,n}(\mathbf{y})\leq\dim\g_n^{\mathbf{y}(n)}-\dim\g_{m+1}^{\mathbf{y}(m+1)}\leq(n-(m+1))\chi(\g).\]
    Together with Theorem \ref{thm:lower bound}, this implies the equality 
    \[\dim\g_n^{\mathbf{y}(n)}-\dim\g_{m+1}^{\mathbf{y}(m+1)}=(n-(m+1))\chi(\g).\]
    Now, by hypothesis on the dimension of $\g_n^{\mathbf{y}(n)}$, we have
    \[\dim\g_{m+1}^{\mathbf{y}(m+1)}=\dim\g_m^{\mathbf{y}(m)}+\chi(\g).\]
    We shall now extend the equality to all $k\geq m$. By the concavity of the sequence $(\dim\g_k^{\mathbf{y}(k)})_{k\in\Natural}$, we know that for all natural numbers $k$
    \[\dim\g_{k+2}^{\mathbf{y}(k+2)}-\dim\g_{k+1}^{\mathbf{y}(k+1)}\leq\dim\g_{k+1}^{\mathbf{y}(k+1)}-\dim\g_{k}^{\mathbf{y}(k)}.\]
    It follows by a straightforward induction that for all $k\geq m$
    \[\dim\g_{k+1}^{\mathbf{y}(k+1)}-\dim\g_{k}^{\mathbf{y}(k)}\leq\dim\g_{m+1}^{\mathbf{y}(m+1)}-\dim\g_{m}^{\mathbf{y}(m)}=\chi(\g).\]
    As $\dim\g_{k+1}^{\mathbf{y}(k+1)}-\dim\g_{k}^{\mathbf{y}(k)}\geq\Mod_{k,k+1}(\mathbf{y})$, Theorem \ref{thm:lower bound} implies that for all $k\geq m$,
    \[\dim\g_{k+1}^{\mathbf{y}(k+1)}-\dim\g_{k}^{\mathbf{y}(k)}=\chi(\g).\]
    Again, a quick induction gives that for all $k\geq m$,
    \[\dim\g_{k}^{\mathbf{y}(k)}=(k-m)\chi(\g)+\dim\g_{m}^{\mathbf{y}(m)}.\]
    From this we deduce that for all pairs of integers $(m',n')$ with $m\leq m'\leq n'$, we have
    \[\dim\g_{n'}^{\mathbf{y}(n')}-\dim\g_{m'}^{\mathbf{y}(m')}=(n'-m')\chi(\g).\]
    As the left-hand side is larger than or equal to $\Mod_{m',n'}(\mathbf{y})$, Theorem \ref{thm:lower bound} implies the desired equality.
\end{proof}

\subsection{Shift property}
We end this section with a result on a ``shifting'' property of $(m,n)$-modality. We start with a technical lemma on centralizers in generalized Takiff Lie algebras whose proof is a straightforward verification.
\begin{lem}
    Let $m$ and $k$ be two natural numbers with $k\geq1$ and $\mathbf{z}$ an element of $\g_\infty$. Let $\mathbf{z}'(m+k)=(0,...,0,z_0,...,z_m)$ in $\g_{m+k}$, then
    \[\g_{m+k}^{\mathbf{z}'(m+k)}=\{(y_0,y_1,...,y_{m+k})\in\g_{m+k}~|~(y_0,...,y_m)\in\g_m^{\mathbf{z}(m)}\}.\]
    In particular, we have equality $\dim\g_{m+k}^{\mathbf{z}'(m+k)}=\dim\g_m^{\mathbf{z}(m)}+k\dim\g$.
\end{lem}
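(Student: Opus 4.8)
The plan is to unwind the centralizer condition coefficient by coefficient in the variable $T$. Note first that $\mathbf{z}'(m+k)$ has coefficients $z'_i = 0$ for $0 \le i \le k-1$ and $z'_{k+q} = z_q$ for $0 \le q \le m$; equivalently, $\mathbf{z}'(m+k)$ is the image in $\g_{m+k}$ of $T^k\mathbf{z}$. Given $\mathbf{y} \in \g_\infty$, the element $\mathbf{y}(m+k)$ lies in $\g_{m+k}^{\mathbf{z}'(m+k)}$ if and only if $[\mathbf{z}', \mathbf{y}](m+k) = 0$, that is, $\sum_{i+j=l}[z'_i, y_j] = 0$ for every $l$ with $0 \le l \le m+k$.

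Next I would split this family of equations according to the range of $l$. For $0 \le l \le k-1$ the sum is empty, since $z'_i = 0$ whenever $i \le k-1$, so these equations hold automatically. For $k \le l \le m+k$ I would write $l = k+q$ with $0 \le q \le m$; after the substitution $i = k+p$, $z'_{k+p} = z_p$, and using $q \le m$ so that no $z_p$ with $p > m$ intervenes, the equation becomes $\sum_{p=0}^{q}[z_p, y_{q-p}] = 0$. As $q$ ranges over $0, \dots, m$ these are exactly the $m+1$ defining equations of $\g_m^{\mathbf{z}(m)}$, and they involve only $y_0, \dots, y_m$. Hence $\mathbf{y}(m+k)$ centralizes $\mathbf{z}'(m+k)$ precisely when $(y_0, \dots, y_m) \in \g_m^{\mathbf{z}(m)}$, with $y_{m+1}, \dots, y_{m+k}$ unconstrained, which is the asserted equality of sets.

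For the dimension count, observe that the right-hand set is the preimage of $\g_m^{\mathbf{z}(m)}$ under the surjective linear map $\g_{m+k} \to \g_m$, $(y_0, \dots, y_{m+k}) \mapsto (y_0, \dots, y_m)$, whose kernel consists of the copies of $\g$ sitting in degrees $m+1, \dots, m+k$ and so has dimension $k\dim\g$; therefore the centralizer has dimension $\dim\g_m^{\mathbf{z}(m)} + k\dim\g$. I do not expect a genuine obstacle: the argument is pure bookkeeping, and the only points needing a little care are the index shift $l \mapsto l-k$, the verification that no equations in $T$-degrees above $m+k$ are required (there are none, as $\g_{m+k}$ only records degrees up to $m+k$), and the remark that the degree-$q$ equations with $q \le m$ never call on coefficients $z_p$ with $p > m$.
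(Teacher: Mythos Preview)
Your proposal is correct; the paper does not write out a proof and merely remarks that the result is a ``straightforward verification,'' which is precisely the coefficient-by-coefficient computation you carry out. Your index bookkeeping and the dimension count via the surjection $\g_{m+k}\to\g_m$ are both sound.
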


Using this lemma, we show the ``shifting'' property of the $(m,n)$-modality.

\begin{prop}
\label{prop:shift}
    Let $\mathbf{x}$ be an element of $\g_\infty$, $k$ a strictly positive integer and $\mathbf{x'}$ the element of $\g_\infty$ such that  $\mathbf{x'}(k-1)=(0,...,0)$ and $x'_{k+i}=x_i$ for all natural numbers $i$. Then, for all natural numbers $m$ and $n$ such that $m\leq n$, we have 
    \[\Mod_{m,n}(\mathbf{x})=\Mod_{m+k,n+k}(\mathbf{x'}).\]
\end{prop}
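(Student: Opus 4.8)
The plan is to reduce the statement about $\Mod_{m+k,n+k}(\mathbf{x}')$ to the statement about $\Mod_{m,n}(\mathbf{x})$ by setting up a correspondence between the elements of $\g_\infty$ over which we minimize, and using the dimension formula from the preceding lemma (let me call it Lemma~A) to transport the centralizer dimensions. Recall that
\[
\Mod_{m+k,n+k}(\mathbf{x}')=\min_{\substack{\mathbf{w}\in\g_\infty\\\mathbf{w}(m+k)=\mathbf{x}'(m+k)}}\bigl(\dim\g_{n+k}^{\mathbf{w}(n+k)}-\dim\g_{m+k}^{\mathbf{x}'(m+k)}\bigr).
\]
The key observation is that, because $\mathbf{x}'(k-1)=0$ and $x'_{k+i}=x_i$, an element $\mathbf{w}\in\g_\infty$ with $\mathbf{w}(m+k)=\mathbf{x}'(m+k)$ is precisely one of the form $\mathbf{w}=(0,\dots,0,w_k,w_{k+1},\dots)$ with $(w_k,\dots,w_{k+m})=(x_0,\dots,x_m)$ and the later coordinates $w_{k+m+1},w_{k+m+2},\dots$ arbitrary; equivalently, $\mathbf{w}=\mathbf{y}'$ where $\mathbf{y}\in\g_\infty$ ranges over all elements with $\mathbf{y}(m)=\mathbf{x}(m)$ and $\mathbf{y}'$ is the $k$-shift of $\mathbf{y}$ (the element with $\mathbf{y}'(k-1)=0$, $y'_{k+i}=y_i$). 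This gives a bijection between the two index sets of the minima.

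Next I would apply Lemma~A twice. Applied to $\mathbf{z}=\mathbf{y}(n)$ it gives $\dim\g_{n+k}^{\mathbf{y}'(n+k)}=\dim\g_n^{\mathbf{y}(n)}+k\dim\g$, since $\mathbf{y}'(n+k)=(0,\dots,0,y_0,\dots,y_n)$ is exactly the element to which the lemma applies (here using $n+k = n + k$ with the ``$m$'' of the lemma being $n$). Applied to $\mathbf{z}=\mathbf{x}(m)$ it gives $\dim\g_{m+k}^{\mathbf{x}'(m+k)}=\dim\g_m^{\mathbf{x}(m)}+k\dim\g$. Subtracting, the term $k\dim\g$ cancels:
\[
\dim\g_{n+k}^{\mathbf{y}'(n+k)}-\dim\g_{m+k}^{\mathbf{x}'(m+k)}=\dim\g_n^{\mathbf{y}(n)}-\dim\g_m^{\mathbf{x}(m)}.
\]
Taking the minimum over $\mathbf{y}$ (equivalently over $\mathbf{w}=\mathbf{y}'$) on both sides yields $\Mod_{m+k,n+k}(\mathbf{x}')=\Mod_{m,n}(\mathbf{x})$.

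I do not expect a serious obstacle here; the only point requiring care is the bookkeeping in the first paragraph — checking that the shift operation $\mathbf{y}\mapsto\mathbf{y}'$ really is a bijection from $\{\mathbf{y}\in\g_\infty : \mathbf{y}(m)=\mathbf{x}(m)\}$ onto $\{\mathbf{w}\in\g_\infty : \mathbf{w}(m+k)=\mathbf{x}'(m+k)\}$, and that truncating commutes with shifting in the sense that $(\mathbf{y}')(n+k)$ is the $\g_{n+k}$-element $(0,\dots,0,y_0,\dots,y_n)$ to which Lemma~A applies with parameter $n$. Both are immediate from the definitions of $\mathbf{x}'$ and of the truncation maps. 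One should also note $n+k\ge m+k$, so $\Mod_{m+k,n+k}$ is defined, and handle the trivial case $m=n$ (where both sides are $0$) separately or simply observe the argument goes through unchanged.
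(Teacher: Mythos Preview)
Your proof is correct and follows essentially the same approach as the paper: both arguments rest on the preceding lemma to convert $\dim\g_{n+k}^{\mathbf{y}'(n+k)}$ into $\dim\g_n^{\mathbf{y}(n)}+k\dim\g$ (and likewise at level $m$), so that the $k\dim\g$ terms cancel in the difference. The only cosmetic difference is that you package the argument as a single bijection $\mathbf{y}\mapsto\mathbf{y}'$ between the two index sets and transport the objective function across it, whereas the paper proves the two inequalities $\Mod_{m,n}(\mathbf{x})\geq\Mod_{m+k,n+k}(\mathbf{x}')$ and $\Mod_{m,n}(\mathbf{x})\leq\Mod_{m+k,n+k}(\mathbf{x}')$ separately by choosing a minimizer on one side and shifting it; these are two presentations of the same idea.
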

\begin{proof}
    Consider an element $\mathbf{z}$ of $\g_\infty$ such that $\mathbf{z}(m)=\mathbf{x}(m)$ and $\dim\g_n^{\mathbf{z}(n)}-\dim\g_m^{\mathbf{x}(m)}=\Mod_{m,n}(\mathbf{x})$ and let $\mathbf{z}'(n+k)=(0,...,0,z_0,...,z_n)$ in $\g_{n+k}$. By the previous lemma, we have that
    \begin{align*}
        \dim\g_{n+k}^{\mathbf{z}'(n+k)}&=\dim\g_n^{\mathbf{z}(n)}+k\dim\g\\
        &=\Mod_{m,n}(\mathbf{x})+\dim\g_m^{\mathbf{x}(m)}+k\dim\g\\
        &=\Mod_{m,n}(\mathbf{x})+\dim\g_{n+k}^{\mathbf{x}'(m+k)}.
    \end{align*}
    As $\mathbf{z}'(m+k)=\mathbf{x}'(m+k)$, we have $\Mod_{m,n}(\mathbf{x})\geq\Mod_{m+k,n+k}(\mathbf{x}')$.
    
    Conversely, let $\mathbf{z}'\in\g_\infty$ be such that $\mathbf{z}'(m+k)=\mathbf{x}'(m+k)$ and $\dim\g_{n+k}^{\mathbf{z}'(n+k)}=\dim\g_{m+k}^{\mathbf{x}'(m+k)}+\Mod_{m+k,n+k}(\mathbf{x}')$. Let $\mathbf{z}\in\g_\infty$ be such that $z_i=z'_{i+k}$ for all natural numbers $i$. We have that $\mathbf{z}(m)=\mathbf{x}(m)$ and therefore, by the previous lemma,
    \begin{align*}
        \Mod_{m,n}(\mathbf{x})&\leq\dim\g_n^{\mathbf{z}(n)}-\dim\g_m^{\mathbf{x}(m)}\\
        &=(\dim\g_n^{\mathbf{z}(n)}+k)-(\dim\g_m^{\mathbf{x}(m)}+k)\\
        &=\dim\g_{n+k}^{\mathbf{z}'(m+k)}-\dim\g_{n+k}^{\mathbf{x}'(m+k)}\\
        &=\Mod_{m+k,n+k}(\mathbf{x}').
    \end{align*}
    This concludes the proof. 
\end{proof}

\section{Cases for which the lower bound is sharp}
\label{sec:sharp}
In this section, we give cases for which the bound given in Theorem \ref{thm:lower bound} is sharp. We have seen in the introductory example that it was the case for abelian Lie algebras as $\dim\g=\chi(\g)$.
\subsection{Regular and subregular cases in quadratic Lie algebras}
\label{sub:reg}
We begin by proving our claim that Theorem \ref{thm:RT} implies Theorem \ref{thm:reached reg}.
\begin{proof}[Proof of Theorem \ref{thm:reached reg}]
    Let $\mathbf{x}$ be an element of $\g_\infty$ such that $x_0$ is a regular element of $\g$. From Theorem \ref{thm:RT}, it follows that $\dim\g_1^{\mathbf{x}(1)}=2\chi(\g)=\chi(\g)+\dim\g^{x_0}$. Therefore, we have $\Mod_{0,1}(\mathbf{x})=\chi(\g)$ and we conclude by applying Proposition \ref{prop:full ext} to $\mathbf{x}$ with $\mathbf{y}=\mathbf{x}$ and $(m,n)=(0,1)$.
\end{proof}

\begin{rem}
   From this Theorem, it follows that given an integer $k$ the projection of $\{\mathbf{x}\in\g_\infty~|~\Mod_{m,n}(\mathbf{x})=(n-m)\chi(\g)\text{ for all } n\geq m\}$ on $\g_k$ is dense as it contains $\{\mathbf{x}(k)\in\g_k~|~x_0 \text{ is regular in }\g\}$ which is dense being open and non-empty.
\end{rem}

\begin{example}
\label{example: s}
    Let $\mathfrak{s}$ be the Lie algebra as in Example \ref{example: list}. We easily check that every non-trivial element of $\mathfrak{s}$ is regular with centralizer of dimension $2$.
    
    We conclude by Theorem \ref{thm:reached reg} that for all elements $\mathbf{x}$ of $\mathfrak{s}_\infty$ such that $x_0\neq0$ and all pairs of natural numbers $(m,n)$ with $m\leq n$, $\Mod_{m,n}(\mathbf{x})=(n-m)\chi(\mathfrak{s})=2(n-m)$. It follows easily from Proposition \ref{prop:shift} that the equality is still true even if $x_0=0$.
\end{example}

This result covers the case where $x_0$ is regular, we now prove Proposition \ref{prop: reached subreg} stating that the conjecture also holds for subregular elements under whose centralizers are not abelian.

\begin{proof}[Proof of Proposition \ref{prop: reached subreg}]
    Let $\mathbf{x}$ be an element of $\g_\infty$ such that $x_0$ is subregular.
    \paragraph{$1\Rightarrow2$}: This is trivial.
    \paragraph{$2\Rightarrow3$}: Let $(m,n)$ be the smallest pair of natural numbers with respect to the lexicographic order such that $m<n$  and $\Mod_{m,n}(\mathbf{x})=(n-m)\chi(\g)$. By Proposition \ref{prop:extension}, we have that $n=m+1$. Without loss of generality, we may assume that $\dim\g_{m+1}^{\mathbf{x}(m+1)}=\dim\g_m^{\mathbf{x}(m)}+\chi(\g)$. By Proposition \ref{prop:adjoint concavity}, the sequence $(\dim\g_m^{\mathbf{x}(m)})_{m\in\Natural}$ is concave. Hence, the ``Slopes Lemma'' implies that for all natural numbers $k$,
    \[\dim\g_{k+1}^{\mathbf{x}(k+1)}-\dim\g_k^{\mathbf{x}(k)}\leq\dim\g_1^{\mathbf{x}(1)}-\dim\g^{x_0}.\]
    By Theorem \ref{thm:lower bound} and Lemma \ref{lem:m=1}, we get that
    \[\chi(\g)\leq\dim\g_{k+1}^{\mathbf{x}(k+1)}-\dim\g_k^{\mathbf{x}(k)}\leq\dim\g^{x_0}=\chi(\g)+2.\]
     Since dimensions of centralizers are of the same parity, $\dim\g_{k+1}^{\mathbf{x}(k+1)}-\dim\g_k^{\mathbf{x}(k)}$ is either equal to the left-hand side or the right-hand side of the above inequality. By our choice of $m$, we know that if $k<m$ then,
     \[\dim\g_{k+1}^{\mathbf{x}(k+1)}-\dim\g_k^{\mathbf{x}(k)}=\dim\g^{x_0}.\]
     It follows from a quick recursion that for all natural numbers $k$ with $k\leq m$
     \begin{equation}
        \label{eq 1}
         \dim\g_k^{\mathbf{x}(k)}=(k+1)\dim\g^{x_0}.
     \end{equation}
     
     If $m=0$, then by Lemma \ref{lem:m=1}, we have
     \[\dim(\g_0^{x_0})^{K(x_1,\cdot)|_{\g^{x_0}}}=\dim\g_1^{\mathbf{x}(1)}-\dim\g^{x_0}=\chi(\g)<\dim\g^{x_0}.\]
     Therefore the centralizer of $K(x_1,\cdot)|_{\g^{x_0}}$ is not the whole space. Hence $\g^{x_0}$ is not abelian.
     
     If $m=1$, then Lemma \ref{lem:m=1} and \eqref{eq 1} gives
     \[\dim(\g_0^{x_0})^{K(x_1,\cdot)|_{\g^{x_0}}}=\dim\g_1^{\mathbf{x}(1)}-\dim\g^{x_0}=\dim\g^{x_0}.\]
     
     Similarily, if $m\geq2$, then by using Lemma \ref{lem: adjoint RT tech} and \eqref{eq 1} we get that
     \begin{align*}
         \dim(\g_{m-1}^{\mathbf{x}(m-1)})^{\mathbf{h}(m-1)}&=\dim\g_{m}^{\mathbf{x}(m)}-\dim\g_{m-1}^{\mathbf{x}(m-1)}+\dim\g_{m-2}^{\mathbf{x}(m-2)}\\
         &=m\dim\g^{x_0}\\
         &=\dim\g_{m-1}^{\mathbf{x}(m-1)},
     \end{align*}
     where $\mathbf{h}(m-1)$ is the restriction of $K((x_1,2x_2,...,mx_{m}),\cdot)$ to ${\g_{m-1}^{\mathbf{x}(m-1)}}.$
     
     Hence, when $m\geq1$, the obvious inclusion $(\g_{m-1}^{\mathbf{x}(m-1)})^{\mathbf{h}(m-1)}\subset\g_{m-1}^{\mathbf{x}(m-1)}$ gives the equality
     \begin{equation}
     \label{eq 2}
         (\g_{m-1}^{\mathbf{x}(m-1)})^{\mathbf{h}(m-1)}=\g_{m-1}^{\mathbf{x}(m-1)}.
     \end{equation}

     Now, by using Lemma \ref{lem: adjoint RT tech} and \eqref{eq 1}, we get that
     \begin{align*}
         \dim(\g_{m}^{\mathbf{x}(m)})^{\mathbf{h}(m)}&=\dim\g_{m+1}^{\mathbf{x}(m+1)}-\dim\g_{m}^{\mathbf{x}(m)}+\dim\g_{m-1}^{\mathbf{x}(m-1)}\\
         &=m\dim\g^{x_0}+\chi(\g)\\
         &<\dim\g_{m}^{\mathbf{x}(m)},
     \end{align*}
     where $\mathbf{h}(m)$ is the restriction of $K((x_1,2x_2,...,(m+1)x_{m+1}),\cdot)$ to ${\g_{m}^{\mathbf{x}(m)}}.$
     Hence the inclusion $(\g_{m}^{\mathbf{x}(m)})^{\mathbf{h}(m)}\subset\g_{m}^{\mathbf{x}(m)}$ is strict. This implies that there exist $\mathbf{z}(m)$ and $\mathbf{z'}(m)$ in $\g_m^{\mathbf{x}(m)}$ such that
     \[\mathbf{h}(m)([\mathbf{z}(m),\mathbf{z'}(m)])\neq0.\]
     Yet,
     
     \begin{align*}
         \mathbf{h}(m)([\mathbf{z}(m),\mathbf{z'}(m)])&=K((x_1,2x_2,...,mx_m,0),[\mathbf{z}(m),\mathbf{z'}(m)])\\&\quad\quad+K((0,...,0,(m+1)x_{m+1}),[\mathbf{z}(m),\mathbf{z'}(m)])\\
         &=\mathbf{h}(m-1)([\mathbf{z}(m-1),\mathbf{z'}(m-1)])+(m+1)K(x_{m+1},[z_0,z'_0])\\
         &=(m+1)K(x_{m+1},[z_0,z'_0]),
     \end{align*}
     because $\mathbf{h}(m-1)([\mathbf{z}(m-1),\mathbf{z'}(m-1)])=0$ by \eqref{eq 2} and the fact that $\mathbf{z}(m-1)$ and $\mathbf{z'}(m-1)$ are in $\g_{m-1}^{\mathbf{x}(m-1)}$.
     
     From this, we conclude that $(m+1)K(x_{m+1},[z_0,z'_0])\neq0$, which implies that $[z_0,z'_0]\neq0$. As $[\mathbf{z}(m),\mathbf{x}(m)]=0$, we have that $[\mathbf{z}(m),\mathbf{x}(m)]_0=[z_0,x_0]=0$ hence $z_0\in\g^{x_0}$. Similarly $z'_0\in\g^{x_0}$, hence we deduce that $\g^{x_0}$ is not abelian.
     \paragraph{$3\Rightarrow1$}: Assume that $\g^{x_0}$ is not abelian and fix $m$ a natural number. By Proposition \ref{prop:full ext}, if there exists a natural number $l<m$ such that $\dim\g_{l+1}^{\mathbf{x}(l+1)}-\dim\g_{l}^{\mathbf{x}(l)}=\chi(\g)$, then for all natural numbers $n\geq m$, $\Mod_{m,n}(\mathbf{x})=(m-n)\chi(\g)$, and we are done. So we can assume that for all natural numbers $l<m$, $\dim\g_{l+1}^{\mathbf{x}(l+1)}-\dim\g_{l}^{\mathbf{x}(l)}>\chi(\g)$. As we have seen above, this implies that $\dim\g_{l+1}^{\mathbf{x}(l+1)}-\dim\g_{l}^{\mathbf{x}(l)}=\chi(\g)+2=\dim\g^{x_0}$. It follows that for all natural numbers $l$ with $l\leq m$
     \[\dim\g_{l}^{\mathbf{x}(l)}=(l+1)\dim\g^{x_0}.\]

    Let $\mathbf{y}$ be an element of $\g_\infty$ such that $\mathbf{y}(m)=\mathbf{x}(m)$ and $y_{m+1}$ is not in $[\g^{x_0},\g^{x_0}]^\perp$, where orthogonality is with respect to $K$. Such a $\mathbf{y}$ exists as $\g^{x_0}$ is not abelian and $K$ is non-degenerate.
    By Lemma \ref{lem: adjoint RT tech}, we have

    $$\dim\g_{m+1}^{\mathbf{y}(m+1)}=\dim(\g_m^{\mathbf{x}(m)})^{\mathbf{h}(m)}+\chi(\g)+2,$$
    where $\mathbf{h}(m)$ is the restriction of $K((x_1,2x_2,...,mx_{m},(m+1)y_{m+1}),\cdot)$ to ${\g_m^{\mathbf{x}(m)}}$.
    
    Let $z$ and $z'$ be elements of $\g^{x_0}$ such that $K(y_{m+1},[z,z'])\neq0$. Consider the injection $\iota_{m}:\g_{m-1}^{\mathbf{x}(m-1)}\rightarrow\g_{m}^{\mathbf{x}(m)}$ that maps $(z_0,...,z_{m-1})$ to $(0,z_0,...,z_{m-1})$ and the surjection $\pi_m:\g_{m}^{\mathbf{x}(m)}\rightarrow\g^{x_0}$ which maps $(z_0,...,z_m)$ to $z_0$. We have that $\iota_m$ is a bijection onto $\ker\pi_{m}$, so $\dim\operatorname{Im}(\pi_{m})=\dim\g_{m}^{\mathbf{x}(m)}-\dim\g_{m-1}^{\mathbf{x}(m)}=\dim\g^{x_0}$. Hence $\pi_{m}$ is surjective onto $\g^{x_0}$. Therefore, there exists $\mathbf{z'}(m)\in\g_m^{\mathbf{x}(m)}$ such that $z'_0=z'$ . Consequently,
    
    $$\mathbf{h}([(0,...,0,z),\mathbf{z'}(m)])=(m+1)K(y_{m+1},[z,z'])\neq0.$$

    As $(0,...,0,z)$ is in $\g_m^{\mathbf{x}(m)}$, we deduce that $(\g_m^{\mathbf{x}(m)})^{\mathbf{h}(m)}\subsetneq\g_m^{\mathbf{x}(m)}$. Then
    $$\dim\g_{m+1}^{\mathbf{y}(m+1)}<\dim\g_m^{\mathbf{x}(m)}+\chi(\g)+2.$$

  As the dimensions of centralizers have the same parity, we get that
    $$\dim\g_{m+1}^{\mathbf{y}(m+1)}\leq\dim\g_m^{\mathbf{x}(m)}+\chi(\g).$$

     By Theorem \ref{thm:lower bound}, we have $\chi(\g)\leq\Mod_{m,m+1}(\mathbf{x})$. Hence we deduce the equality $\Mod_{m,m+1}(\mathbf{x})=\chi(\g)$.
    
    Finaly, we conclude by Proposition \ref{prop:extension} that $\Mod_{m,n}(\mathbf{x})=(m-n)\chi(\g)$ for all natural numbers $n\geq m$.
\end{proof}

\begin{example}
    Let $\mathfrak{n}$ be the Lie algebra given in Example \ref{example: list}. By a direct computation we have that $\chi(\mathfrak{n})=4$, the set $\mathfrak{n}_\mathrm{reg}$ of regular elements of $\mathfrak{n}$ is $\Complex a_1\oplus\Complex a_2\oplus\Complex a_3\setminus\{0\}$ and the set of its subregular elements is $\mathfrak{n}\setminus\mathfrak{n}_\mathrm{reg}$. Subregular elements have centralizers $\mathfrak{n}$ which are not abelian. By Theorem \ref{thm:reached reg} and Proposition \ref{prop: reached subreg} we conclude that for all elements $\mathbf{x}$ of $\mathfrak{n}_\infty$ and all pairs of natural numbers $(m,n)$ with $m\leq n$, $\Mod_{m,n}(\mathbf{x})=(n-m)\chi(\mathfrak{n})=4(n-m).$
    As such, even though it is not reductive, $\mathfrak{n}$ satifies the conclusion of Conjecture \ref{conj: main}.

    In Example \ref{example: s}, the element $0$ is subregular, we recover the equality given when $x_0=0$.
\end{example}

\begin{rem}
    Note that $\chi(\g)+2$ is the minimal dimension for the centralizer of a non regular element and that having an abelian centralizer is a closed condition. Hence, the set of subregular elements of $\g$ whose centralizers are not abelian is an open subset of the set of non-regular elements in $\g$.
\end{rem}

\begin{cor}
    \label{cor: not reached subreg}
    Let $\mathbf{x}$ be an element of $\g_\infty$ such that $x_0$ is subregular, then the following are equivalent:
    \begin{enumerate}
        \item For all pairs of natural numbers $(m,n)$ with $m\leq n$, we have that
        
        $\Mod_{m,n}(\mathbf{x})=(n-m)(\chi(\g)+2).$
        \item There exists a pair of natural numbers $(m,n)$ with $m<n$ such that $\Mod_{m,n}(\mathbf{x})=(n-m)(\chi(\g)+2)$.
        \item The centralizer $\g^{x_0}$ is an abelian Lie algebra.
    \end{enumerate}
\end{cor}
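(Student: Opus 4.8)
The plan is to establish the cycle $1 \Rightarrow 2 \Rightarrow 3 \Rightarrow 1$, treating this statement as the logical complement of Proposition~\ref{prop: reached subreg}. The implication $1 \Rightarrow 2$ is immediate, and $2 \Rightarrow 3$ I would prove by contraposition: if $\g^{x_0}$ were not abelian, then Proposition~\ref{prop: reached subreg} (its implication $3 \Rightarrow 1$) would give $\Mod_{m,n}(\mathbf{x}) = (n-m)\chi(\g)$ for \emph{every} pair $m \leq n$, and specialising to the pair furnished by condition~$2$, for which $m < n$, would contradict $\Mod_{m,n}(\mathbf{x}) = (n-m)(\chi(\g)+2)$ since $\chi(\g) < \chi(\g)+2$.

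The substance is in $3 \Rightarrow 1$, whose crux is the claim that if $\g^{x_0}$ is abelian --- so $\dim\g^{x_0} = \chi(\g)+2$, $x_0$ being subregular --- then every $\mathbf{y} \in \g_\infty$ with $y_0 = x_0$ satisfies $\dim\g_k^{\mathbf{y}(k)} = (k+1)(\chi(\g)+2)$ for all $k$. To prove the claim I would set $d_k := \dim\g_k^{\mathbf{y}(k)}$, so that $d_0 = \dim\g^{x_0} = \chi(\g)+2$; since $\g^{x_0}$ is abelian we have $(\g^{x_0})^{\phi} = \g^{x_0}$ for every linear form $\phi$ on $\g^{x_0}$, so Lemma~\ref{lem:m=1} gives $d_1 = 2\dim\g^{x_0}$, i.e. $d_1 - d_0 = \chi(\g)+2$. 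Next, $(d_k)$ is concave by Proposition~\ref{prop:adjoint concavity}, so its successive differences are non-increasing and hence all at most $\chi(\g)+2$; they are at least $\chi(\g)$ by Theorem~\ref{thm:lower bound}; and they all share the parity of $\chi(\g)$, since dimensions of adjoint centralizers in the quadratic Lie algebra $\g_k$ are all congruent modulo $2$ (see \cite{Dixmier1974}). Thus each difference $d_{k+1}-d_k$ is either $\chi(\g)$ or $\chi(\g)+2$.

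Now suppose, for contradiction, that some difference $d_{k_0+1} - d_{k_0}$ equals $\chi(\g)$. By concavity all later differences are $\leq \chi(\g)$, hence $= \chi(\g)$, so $\dim\g_n^{\mathbf{y}(n)} = d_{k_0} + (n-k_0)\chi(\g)$ for $n > k_0$; comparing with Theorem~\ref{thm:lower bound} this forces $\Mod_{k_0,n}(\mathbf{y}) = (n-k_0)\chi(\g)$. Since $y_0 = x_0$ is subregular, Proposition~\ref{prop: reached subreg} applies to $\mathbf{y}$, and its implication $2 \Rightarrow 3$ then forces $\g^{y_0} = \g^{x_0}$ to be non-abelian, contradicting our assumption. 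Hence every difference equals $\chi(\g)+2$ and $d_k = (k+1)(\chi(\g)+2)$, proving the claim. The conclusion follows at once: for $m \leq n$ and any $\mathbf{y}$ with $\mathbf{y}(m) = \mathbf{x}(m)$ one has $y_0 = x_0$, so by the claim (applied to $\mathbf{y}$ and to $\mathbf{x}$)
\[\dim\g_n^{\mathbf{y}(n)} - \dim\g_m^{\mathbf{x}(m)} = (n+1)(\chi(\g)+2) - (m+1)(\chi(\g)+2) = (n-m)(\chi(\g)+2),\]
independently of $\mathbf{y}$, and taking the minimum over such $\mathbf{y}$ gives $\Mod_{m,n}(\mathbf{x}) = (n-m)(\chi(\g)+2)$.

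I expect the only delicate point to be the step ruling out a drop among the differences of $(d_k)$: one must recognise that such a drop, combined with concavity and Theorem~\ref{thm:lower bound}, places the perturbed element $\mathbf{y}$ into precisely the situation of condition~$2$ of Proposition~\ref{prop: reached subreg}, which an abelian $\g^{x_0}$ cannot tolerate. Everything else is routine bookkeeping with concave integer sequences together with a single application of Lemma~\ref{lem:m=1}.
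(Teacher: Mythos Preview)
Your proof is correct and uses the same ingredients as the paper --- the contraposition of Proposition~\ref{prop: reached subreg}, concavity (Proposition~\ref{prop:adjoint concavity}), Lemma~\ref{lem:m=1}, and the parity of centralizer dimensions. The paper organizes matters slightly differently, first contraposing Proposition~\ref{prop: reached subreg} to obtain the equivalent strict-inequality statements and then establishing the unconditional upper bound $\Mod_{m,n}(\mathbf{x})\leq(n-m)(\chi(\g)+2)$ via concavity and Lemma~\ref{lem:m=1}, whereas you argue the cycle $1\Rightarrow2\Rightarrow3\Rightarrow1$ and in $3\Rightarrow1$ compute the sequence $(d_k)$ exactly; the substance is the same.
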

\begin{proof}
    Together with Theorem \ref{thm:lower bound}, the contraposition of Propostion \ref{prop: reached subreg} yields that the following are equivalent:
    \begin{enumerate}
        \item For all pairs of natural numbers $(m,n)$ with $m<n$ we have that
        
        $\Mod_{m,n}(\mathbf{x})>(n-m)\chi(\g).$
        \item There exists a pair of natural numbers $(m,n)$ with $m<n$ such that $\Mod_{m,n}(\mathbf{x})>(n-m)\chi(\g)$.
        \item The centralizer $\g^{x_0}$ is an abelian Lie algebra.
    \end{enumerate}
    Let $m\leq n$ be two natural numbers and $\mathbf{y}\in\g_\infty$ such that $\mathbf{y}(m)=\mathbf{x}(m)$. By Theorem \ref{prop:adjoint concavity}, the sequence $(\dim\g_k^{\mathbf{y}(k)})_{k\in\Natural}$ is concave. As such, $\dim\g_n^{\mathbf{y}(n)}-\dim\g_m^{\mathbf{y}(m)}\leq(n-m)(\dim\g_1^{\mathbf{y}(1)}-\dim\g^{y_0})$. By Lemma \ref{lem:m=1} $\dim\g_1^{\mathbf{y}(1)}-\dim\g^{y_0}=\dim(\g^{y_0})^{K(y_1,\cdot)|_{\g^{y_0}}}\leq\dim\g^{y_0}$. As $y_0=x_0$ and $x_0$ is subregular, we get
    $\dim\g_n^{\mathbf{y}(n)}-\dim\g_m^{\mathbf{y}(m)}\leq(n-m)\dim\g^{x_0}=(n-m)(\chi(\g)+2)$. By definition of $(m,n)$-modality, this implies that $\Mod_{m,n}(\mathbf{x})\leq(n-m)(\chi(\g)+2)$ and we are done.    
\end{proof}

\subsection{The reductive case}
In this subsection, the Lie algebra $\g$ is assumed to be reductive.
\subsubsection{The $(0,n)$-modality and Elashvili's conjecture}
\label{subsub:Elash}
We now give the proof of Theorem \ref{thm:main} and the equivalence of Theorem \ref{thm: 01} with the usual formulation of Elashvili's conjecture (Theorem \ref{conj:Elashvili}).

\begin{proof}[Proof of Theorem \ref{thm:main}]
    By Lemma \ref{lem:m=1}, we have that $\dim\g_1^{(x_0,y_1)}=\dim\g^{x_0}+\dim(\g^{x_0})^{K(y_1,\cdot)|_{\g^{x_0}}}$ for all $y_1$ in $\g$. The map $y\mapsto K(y,\cdot)$
    from $\g$ to $\g^*$ is surjective so $y\mapsto K(y,\cdot)|_{\g^{x_0}}$
    from $\g$ to $(\g^{x_0})^*$ also is. As such $\Mod_{0,1}(\mathbf{x})=\min_{y_1\in\g}\dim(\g^{x_0})^{K(y_1,\cdot)|_{\g^{x_0}}}=\chi(\g^{x_0})$ which prove the equivalence between Theorem \ref{thm: 01} and Theorem \ref{conj:Elashvili}.

    We conclude by Proposition \ref{prop:extension} that $\Mod_{0,n}(\mathbf{x})=n\chi(\g)$ for all natural numbers $n$.
\end{proof}

\subsubsection{Certain classes of elements}
The proof of Theorem \ref{thm:red reg/sub} follows immediately from the reductiveness of $\g$ and our previous results.

\begin{proof}[Proof of Theorem \ref{thm:red reg/sub}]
    Centralizers of subregular elements in a reductive Lie algebra are not abelian (see \cite{TauvelYu2005} 35.6.8), as such Proposition \ref{prop: reached subreg} and Theorem \ref{thm:reached reg} imply Theorem \ref{thm:red reg/sub}.
\end{proof}

\begin{example}
    Every non-zero element of $\mathfrak{sl}_2(\Complex)$ is regular and $0$ is subregular, we deduce that Conjecture \ref{conj: main} is satisfied for $\mathfrak{sl}_2(\Complex)$.

    Every element of $\mathfrak{sl}_3(\Complex)$ is either $0$, subregular or regular. Let $\mathbf{x}$ be an element of $\mathfrak{sl}_3(\Complex)_\infty$. By Theorem \ref{thm:red reg/sub}, if $x_0\neq0$ then, for all pairs of natural numbers $(m,n)$ with $m\leq n$ we have $\Mod_{m,n}(\mathbf{x})=(n-m)\chi(\mathfrak{sl}_3(\Complex))=2(n-m).$ When $x_0=0$, we can apply Proposition \ref{prop:shift} recursively to get that the equality still holds. We deduce that Conjecture \ref{conj: main} is satisfied for $\g=\mathfrak{sl}_3(\Complex).$
\end{example}

In the study of centralizers in a reductive Lie algebra, many problems can be reduced to the study of nilpotent elements in a simple Lie algebra. This is due to the fact that centralizers of semisimple elements in a Lie algebra are reductive and that given a Jordan decomposition $x=x_\mathrm{ss}+x_{\mathrm{nil}}$ where $x_\mathrm{ss}$ is the semisimple part and $x_{\mathrm{nil}}$ the nilpotent part, we have that $\g^x=(\g^{x_\mathrm{ss}})^{x_{\mathrm{nil}}}$.

We shall show how a similar reduction can be realized in the generalized Takiff Lie algebras of a reductive Lie algebras. For that, we need to prove two technical lemmas. The first one being a more precise description of the relationship between the Jordan decomposition and the adjoint action of an element, the second one on the action of the algebraic adjoint group.

\begin{lem}
\label{lem:Jordan decomp}
    Let $x$ be an element of $\g$ and $x=x_\mathrm{ss}+x_{\mathrm{nil}}$ its Jordan decomposition. Then
    $$\g^x=(\g^{x_\mathrm{ss}})^{x_{\mathrm{nil}}}\text{ and }[\g,x]=[\g^{x_\mathrm{ss}},x_{\mathrm{nil}}]\oplus[\g,x_\mathrm{ss}].$$
\end{lem}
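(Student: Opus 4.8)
The first identity $\g^x=(\g^{x_\mathrm{ss}})^{x_{\mathrm{nil}}}$ is classical: since $x_\mathrm{ss}$ and $x_{\mathrm{nil}}$ commute and are (up to scalars) polynomials in $\operatorname{ad}x$ with no constant term, the operators $\operatorname{ad}x_\mathrm{ss}$ and $\operatorname{ad}x_{\mathrm{nil}}$ are the semisimple and nilpotent parts of $\operatorname{ad}x$; hence $\ker\operatorname{ad}x=\ker\operatorname{ad}x_\mathrm{ss}\cap\ker\operatorname{ad}x_{\mathrm{nil}}$, and $\ker(\operatorname{ad}x_{\mathrm{nil}}|_{\g^{x_\mathrm{ss}}})$ makes sense because $\operatorname{ad}x_{\mathrm{nil}}$ preserves $\g^{x_\mathrm{ss}}$. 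The plan is to recall this quickly and then devote the work to the decomposition of $[\g,x]=\operatorname{im}\operatorname{ad}x$.

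For the second identity, the key structural input is the $\operatorname{ad}x_\mathrm{ss}$-weight space decomposition $\g=\bigoplus_{\lambda}\g_\lambda$, where $\g_0=\g^{x_\mathrm{ss}}$ and $\operatorname{ad}x_{\mathrm{nil}}$ preserves each $\g_\lambda$ (as $x_{\mathrm{nil}}\in\g_0$). First I would show $[\g,x_\mathrm{ss}]=\bigoplus_{\lambda\neq0}\g_\lambda$: indeed $\operatorname{ad}x_\mathrm{ss}$ acts on $\g_\lambda$ as multiplication by $\lambda$, so it is zero on $\g_0$ and invertible on each $\g_\lambda$ with $\lambda\neq0$. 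Next, $[\g^{x_\mathrm{ss}},x_{\mathrm{nil}}]=\operatorname{im}(\operatorname{ad}x_{\mathrm{nil}}|_{\g_0})\subseteq\g_0$, so the two summands $[\g^{x_\mathrm{ss}},x_{\mathrm{nil}}]$ and $[\g,x_\mathrm{ss}]$ automatically have trivial intersection and their sum is direct. It remains to identify this direct sum with $[\g,x]$. Since $\operatorname{ad}x=\operatorname{ad}x_\mathrm{ss}+\operatorname{ad}x_{\mathrm{nil}}$ preserves the decomposition $\g=\g_0\oplus\bigoplus_{\lambda\neq0}\g_\lambda$, we get $[\g,x]=(\operatorname{ad}x(\g_0))\oplus\bigoplus_{\lambda\neq0}\operatorname{ad}x(\g_\lambda)$. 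On $\g_0$, $\operatorname{ad}x=\operatorname{ad}x_{\mathrm{nil}}$, giving the summand $[\g^{x_\mathrm{ss}},x_{\mathrm{nil}}]$. On $\g_\lambda$ with $\lambda\neq0$, $\operatorname{ad}x=\operatorname{ad}x_\mathrm{ss}+\operatorname{ad}x_{\mathrm{nil}}$ acts as (invertible scalar $\lambda$) plus (nilpotent commuting endomorphism), hence is invertible on $\g_\lambda$; therefore $\operatorname{ad}x(\g_\lambda)=\g_\lambda$ and $\bigoplus_{\lambda\neq0}\operatorname{ad}x(\g_\lambda)=\bigoplus_{\lambda\neq0}\g_\lambda=[\g,x_\mathrm{ss}]$. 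Combining, $[\g,x]=[\g^{x_\mathrm{ss}},x_{\mathrm{nil}}]\oplus[\g,x_\mathrm{ss}]$.

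The main obstacle, such as it is, is purely bookkeeping: one must be careful that $\operatorname{ad}x_{\mathrm{nil}}$ genuinely stabilizes each weight space $\g_\lambda$ (which holds because $[x_{\mathrm{nil}},\g_\lambda]\subseteq\g_\lambda$ as $x_{\mathrm{nil}}\in\g_0$ and brackets add weights), and that "invertible $+$ commuting nilpotent is invertible" is being applied to endomorphisms that actually commute — here $\operatorname{ad}x_\mathrm{ss}$ and $\operatorname{ad}x_{\mathrm{nil}}$ commute because $x_\mathrm{ss}$ and $x_{\mathrm{nil}}$ do. Everything else is the standard weight-space argument, so I expect the write-up to be short.
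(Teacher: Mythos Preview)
Your argument is correct. Both identities are established cleanly, and the bookkeeping you flag (that $\operatorname{ad}x_{\mathrm{nil}}$ preserves each weight space, and that the restriction of $\operatorname{ad}x$ to a nonzero weight space is a nonzero scalar plus a commuting nilpotent, hence invertible) is exactly what is needed.

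The paper takes a slightly different route for the second identity. Rather than decomposing into individual weight spaces and checking invertibility on each $\g_\lambda$, it argues more globally: from $\g=\g^{x_\mathrm{ss}}\oplus[\g,x_\mathrm{ss}]$ it writes $[\g,x]=[\g^{x_\mathrm{ss}},x_{\mathrm{nil}}]+[[\g,x_\mathrm{ss}],x]$, rewrites the second summand as $[[\g,x],x_\mathrm{ss}]$ via the Jacobi identity and $[x,x_\mathrm{ss}]=0$, observes directness, and then uses a dimension count (feeding in the first identity $\g^x=(\g^{x_\mathrm{ss}})^{x_{\mathrm{nil}}}$) to upgrade $[[\g,x],x_\mathrm{ss}]$ to all of $[\g,x_\mathrm{ss}]$. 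Your approach trades this dimension count for the elementary linear-algebra fact that $\lambda\cdot\mathrm{id}+N$ is invertible; it is marginally more self-contained in that it does not need to invoke the first identity as an input to the second. Both arguments are standard and of comparable length.
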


\begin{proof}
    By the properties of the Jordan decomposition in a reductive Lie algebra, the elements $x$, $x_\mathrm{ss}$ and $x_{\mathrm{nil}}$ commute, and as $x_\mathrm{ss}$ is semisimple, we have that
    $$\g=\g^{x_\mathrm{ss}}\oplus[\g,x_\mathrm{ss}].$$
    Hence we deduce that
    \begin{align*}
        [\g,x]&=[\g^{x_\mathrm{ss}},x_{\mathrm{nil}}]+[[\g,x_\mathrm{ss}],x]\\
        &=[\g^{x_\mathrm{ss}},x_{\mathrm{nil}}]+[[\g,x],x_\mathrm{ss}].
    \end{align*}
    As $\g^{x_\mathrm{ss}}$ is a Lie subalgebra of $\g$ and $x_{\mathrm{nil}}$ is in $\g^{x_\mathrm{ss}}$, we have $[x_{\mathrm{nil}},\g^{x_\mathrm{ss}}]\subset{\g^{x_\mathrm{ss}}}$, hence the sum above is direct.

    It follows that
    \begin{align*}
        \dim[[\g,x],x_\mathrm{ss}]&=\dim[\g,x]-\dim[\g^{x_\mathrm{ss}},x_{\mathrm{nil}}]\\
        &=\dim\g-\dim\g^x-(\dim\g^{x_\mathrm{ss}}-\dim(\g^{x_\mathrm{ss}})^{x_{\mathrm{nil}}})\\
        &=\dim\g-\dim\g^{x_\mathrm{ss}}\\
        &=\dim[\g,x_\mathrm{ss}].
    \end{align*}

    Consequently, $[[\g,x],x_\mathrm{ss}]=[\g,x_\mathrm{ss}]$ and the result follows.
\end{proof}

\begin{lem}
\label{lem:proj}
    Let $\mathbf{x}$ be an element of $\g_\infty$, $V$ a subspace of $[\g,x_0]$ and $S$ a subspace of $\g$ such that $\g=V+S$. For all natural numbers $m$, there exists an element $\Tilde{\mathbf{x}}$ of $\g_\infty$ verifying the following conditions:
    \begin{center}
        \begin{itemize}
            \item $\Tilde{\mathbf{x}}(m)$ is in the $\G_m$-orbit of $\mathbf{x}(m)$, where $\G_m$ denotes the algebraic adjoint group of $\g_m$.
            \item $\Tilde{x}_0=x_0$, and for all $k\geq 1$, we have that $\Tilde{x}_k$ is in $S$.
        \end{itemize}
    \end{center} 
\end{lem}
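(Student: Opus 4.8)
The plan is to exploit the unipotent elements of $\G_m$ obtained by exponentiating inner derivations supported in strictly positive $T$-degree, and to use them to push the components $x_1,\dots,x_m$ into $S$ one degree at a time, in such a way that $x_0$ and the components already moved into $S$ are never disturbed.

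First I would isolate the computational input. Fix $w\in\g$ and an integer $k\geq 1$, and regard $\mathbf{w}_k:=w\otimes T^{k}$ as an element of $\g_m$. Since $\operatorname{ad}(\mathbf{w}_k)$ strictly raises the $T$-degree (by $k$), it is nilpotent on $\g_m$, so
\[
g_{w,k}:=\exp\bigl(\operatorname{ad}(\mathbf{w}_k)\bigr)=\sum_{j\geq 0}\tfrac{1}{j!}\,\operatorname{ad}(\mathbf{w}_k)^{j}
\]
is a well-defined unipotent automorphism of $\g_m$, hence an element of $\G_m$. From $[\mathbf{w}_k,\mathbf{x}]=\sum_{i}[w,x_i]\otimes T^{i+k}$ one reads off that, modulo degrees $\geq k+1$, one has $[\mathbf{w}_k,\mathbf{x}(m)]\equiv [w,x_0]\otimes T^{k}$; hence $g_{w,k}\cdot\mathbf{x}(m)$ agrees with $\mathbf{x}(m)$ in every degree $\leq k-1$ and has degree-$k$ component equal to $x_k+[w,x_0]$ (the higher degrees being altered in some uncontrolled way).

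Next I would run an induction on $k=1,\dots,m$. Suppose $g_1,\dots,g_{k-1}\in\G_m$ have been built so that $\mathbf{x}^{(k-1)}(m):=g_{k-1}\cdots g_1\cdot\mathbf{x}(m)$ (with $\mathbf{x}^{(0)}(m)=\mathbf{x}(m)$) has degree-$0$ component $x_0$ and degree-$j$ component in $S$ for $1\leq j\leq k-1$. Let $x^{(k-1)}_k$ be the degree-$k$ component of $\mathbf{x}^{(k-1)}(m)$. As $\g=V+S$, write $x^{(k-1)}_k=v+s$ with $v\in V$ and $s\in S$; since $V\subseteq[\g,x_0]$, write $v=[y,x_0]$ for some $y\in\g$. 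Put $g_k:=g_{-y,k}$. By the computation above, $\mathbf{x}^{(k)}(m):=g_k\cdot\mathbf{x}^{(k-1)}(m)$ agrees with $\mathbf{x}^{(k-1)}(m)$ in all degrees $\leq k-1$ (so its degree-$0$ component is still $x_0$ and degrees $1,\dots,k-1$ are still in $S$), while its degree-$k$ component is $x^{(k-1)}_k+[-y,x_0]=x^{(k-1)}_k-v=s\in S$. This closes the inductive step; after $m$ steps one obtains $g:=g_m\cdots g_1\in\G_m$ with $g\cdot\mathbf{x}(m)=(x_0,s_1,\dots,s_m)$ where each $s_i\in S$ (and the case $m=0$ is trivial).

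Finally, I would set $\Tilde{\mathbf{x}}:=x_0\otimes 1+\sum_{i=1}^{m}s_i\otimes T^{i}\in\g_\infty$. Then $\Tilde{\mathbf{x}}(m)=g\cdot\mathbf{x}(m)$ lies in the $\G_m$-orbit of $\mathbf{x}(m)$, one has $\Tilde{x}_0=x_0$, and for every $k\geq 1$ the component $\Tilde{x}_k$ equals $s_k\in S$ when $1\leq k\leq m$ and $0\in S$ otherwise, which is exactly what is claimed. The only point that requires care is the bookkeeping in the induction, namely the assertion that applying $g_{w,k}$ leaves all components of degree $<k$ untouched; but this is precisely the triangularity built into $\operatorname{ad}(w\otimes T^{k})$ raising the $T$-degree by $k$, so I do not expect a genuine obstacle.
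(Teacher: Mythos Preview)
Your proposal is correct and follows essentially the same argument as the paper: both proceed by induction on the degree $k$, at each step decomposing the current degree-$k$ component as $v+s$ with $v\in V\subseteq[\g,x_0]$ and $s\in S$, writing $v=[y,x_0]$, and applying the elementary automorphism $\exp\bigl(\operatorname{ad}(-y\otimes T^{k})\bigr)\in\G_m$, which leaves degrees $<k$ untouched and replaces the degree-$k$ component by $s$. The only cosmetic difference is that the paper cites \cite{TauvelYu2005} for the fact that such exponentials lie in $\G_m$, whereas you invoke this directly.
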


\begin{proof}
    We shall show by induction on $l$ where $0 \leq l\leq m$ that there exists $\Tilde{\mathbf{y}}\in\g_\infty$ such that $\mathbf{x}(m)\underset{\G_m}{\sim}\Tilde{\mathbf{y}}(m)$, $\Tilde{y}_0=x_0$ and  for all $1\leq k\leq l$, $\Tilde{y}_k$ is in $S$. 

    The case $l=0$ is immediate by taking $\Tilde{\mathbf{y}}=\mathbf{x}$.

    Let $0 \leq l\leq m-1$ and assume that the induction hypothesis is satisfied for $l$. Let $\Tilde{\mathbf{y}}\in\g_\infty$ be such that $\mathbf{x}(m)\underset{\G_m}{\sim}\Tilde{\mathbf{y}}(m)$, $\Tilde{y}_0=x_0$ and for all $1\leq k\leq l$, $\Tilde{y}_k$ is in $S$.
    
    Let $\Tilde{y}_{l+1}^S\in S$ and $\Tilde{y}_{l+1}^V\in V$ be such that $\Tilde{y}_{l+1}=\Tilde{y}_{l+1}^S+\Tilde{y}_{l+1}^V$. As $V$ is a subspace of $[\g,x_0]$, there exists an element $y'$ of $\g$ such that $\Tilde{y}_{l+1}^V=[y',x_0]$. The group $\G_m$ contains elementary automorphisms of $\g_m$ (see \cite{TauvelYu2005} 24.8.2), in particular it contains:
    $$g:=\exp{\operatorname{ad}_{\g_m}(-y'\otimes T^{l+1})}.$$

    Therefore
    $$\Tilde{\mathbf{z}}(m):=g\cdot\Tilde{\mathbf{y}}(m)\underset{\G_m}{\sim}\mathbf{x}(m).$$

    A direct computation shows that for all $0\leq k\leq l$, we have $ \Tilde{z}_k=\Tilde{y}_k$ and $\Tilde{z}_{l+1}=\Tilde{y}_{l+1}-[y',x_0]=\Tilde{y}_{l+1}^s$. Hence the result follows by induction up to $m$ and then choosing $\Tilde{x}_k=0$ for $k>m$.
\end{proof}

From the last two lemmas, we deduce the following reduction result.
\begin{prop}
\label{prop: nilp red}
    Let $\mathbf{x}$ be an element of $\g_\infty$, $n$ a natural number and $\G_n$ the algebraic adjoint group of $\g_n$, $x_0=x_\mathrm{ss}+x_{\mathrm{nil}}$ the Jordan decomposition of $x_0$ and $\mathfrak{s}$ the centralizer of $x_\mathrm{ss}$ in $\g$. There exists an element $\Tilde{\mathbf{x}}$ of $\mathfrak{s}_\infty\subset\g_\infty$ such that $\Tilde{x}_0=x_{\mathrm{nil}}$ and $\mathfrak{g}_n^{\Tilde{\mathbf{x}}(n)}$ is $\G_n$-conjugate to $\g_n^{\mathbf{x}(n)}$. Moreover, $\mathfrak{g}_n^{\Tilde{\mathbf{x}}(n)}=\mathfrak{s}_n^{\Tilde{\mathbf{x}}(n)}$.

    In particular, for all $0\leq m\leq n$, the $(m,n)$-modality of $\mathbf{x}$ as an element of $\g_\infty$ is equal to the $(m,n)$-modality of $\Tilde{\mathbf{x}}$ as an element of $\mathfrak{s}_\infty$.
\end{prop}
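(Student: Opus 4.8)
The plan is to combine the two technical lemmas just proved: Lemma~\ref{lem:proj} to bring $\mathbf{x}$ into a normal form whose Takiff coordinates of positive degree lie in $\mathfrak{s}=\g^{x_\mathrm{ss}}$, and Lemma~\ref{lem:Jordan decomp} to recognize the resulting element of $\g_n$ as carrying a Jordan decomposition with semisimple part $x_\mathrm{ss}\otimes1$. First I would record that $[\g,x_\mathrm{ss}]\subseteq[\g,x_0]$ by Lemma~\ref{lem:Jordan decomp} and that $\g=[\g,x_\mathrm{ss}]\oplus\mathfrak{s}$ since $x_\mathrm{ss}$ is semisimple; so Lemma~\ref{lem:proj}, applied with $V=[\g,x_\mathrm{ss}]$, $S=\mathfrak{s}$ and the integer $n$, produces $\bar{\mathbf{x}}\in\g_\infty$ with $\bar{\mathbf{x}}(n)\in\G_n\cdot\mathbf{x}(n)$, $\bar x_0=x_0$ and $\bar x_k\in\mathfrak{s}$ for $k\geq1$. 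Then set $\Tilde{\mathbf{x}}:=\bar{\mathbf{x}}-x_\mathrm{ss}\otimes1$; since $x_\mathrm{nil}\in\mathfrak{s}$ all coordinates of $\Tilde{\mathbf{x}}$ lie in $\mathfrak{s}$, so $\Tilde{\mathbf{x}}\in\mathfrak{s}_\infty$ and $\Tilde x_0=x_\mathrm{nil}$.

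The key structural claim is that for every $k$ the splitting $\bar{\mathbf{x}}(k)=x_\mathrm{ss}\otimes1+\Tilde{\mathbf{x}}(k)$ is the Jordan decomposition of $\bar{\mathbf{x}}(k)$ in $\g_k$: the operator $\operatorname{ad}_{\g_k}(x_\mathrm{ss}\otimes1)=\operatorname{ad}_\g(x_\mathrm{ss})\otimes\mathrm{id}$ is semisimple, $\operatorname{ad}_{\g_k}(\Tilde{\mathbf{x}}(k))$ preserves the filtration of $\g_k$ by powers of $T$ and induces $\operatorname{ad}_\g(x_\mathrm{nil})$ on each graded piece hence is nilpotent, and the two operators commute because every coordinate of $\Tilde{\mathbf{x}}$ lies in $\g^{x_\mathrm{ss}}$. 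Applying Lemma~\ref{lem:Jordan decomp} inside $\g_k$ gives $\g_k^{\bar{\mathbf{x}}(k)}=(\g_k^{x_\mathrm{ss}\otimes1})^{\Tilde{\mathbf{x}}(k)}$, and a one-line computation shows $\g_k^{x_\mathrm{ss}\otimes1}=\mathfrak{s}_k$ (an element of $\g_k$ centralizes $x_\mathrm{ss}\otimes1$ iff each of its coordinates lies in $\g^{x_\mathrm{ss}}$). Thus $\g_k^{\bar{\mathbf{x}}(k)}=\mathfrak{s}_k^{\Tilde{\mathbf{x}}(k)}$; in particular for $k=n$, since $\bar{\mathbf{x}}(n)\in\G_n\cdot\mathbf{x}(n)$ the space $\mathfrak{s}_n^{\Tilde{\mathbf{x}}(n)}$ is $\G_n$-conjugate to $\g_n^{\mathbf{x}(n)}$, and for $k=m$ we get $\dim\g_m^{\bar{\mathbf{x}}(m)}=\dim\mathfrak{s}_m^{\Tilde{\mathbf{x}}(m)}$.

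For the ``in particular'' I would first use that $\Mod_{m,n}$ depends only on the $\G_m$-orbit and that the projection $\pi_{n,m}$ sends $\G_n$-orbits into $\G_m$-orbits to get $\Mod_{m,n}(\mathbf{x})=\Mod_{m,n}(\bar{\mathbf{x}})$, and then match the two sets $\{\dim\g_n^{\mathbf{y}(n)}:\mathbf{y}\in\g_\infty,\ \mathbf{y}(m)=\bar{\mathbf{x}}(m)\}$ and $\{\dim\mathfrak{s}_n^{\mathbf{z}(n)}:\mathbf{z}\in\mathfrak{s}_\infty,\ \mathbf{z}(m)=\Tilde{\mathbf{x}}(m)\}$. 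For one inclusion, $\mathbf{z}\mapsto x_\mathrm{ss}\otimes1+\mathbf{z}$ carries an extension of $\Tilde{\mathbf{x}}(m)$ in $\mathfrak{s}_\infty$ to an extension of $\bar{\mathbf{x}}(m)$ in $\g_\infty$, and the same Jordan-decomposition argument (the degree-$0$ part $x_\mathrm{nil}$ keeps $\mathbf{z}(n)$ nilpotent in $\g_n$, and $x_\mathrm{ss}\otimes1$ commutes with it) gives $\dim\g_n^{(x_\mathrm{ss}\otimes1+\mathbf{z})(n)}=\dim\mathfrak{s}_n^{\mathbf{z}(n)}$. For the other inclusion, given $\mathbf{y}$ with $\mathbf{y}(m)=\bar{\mathbf{x}}(m)$ I would re-run the proof of Lemma~\ref{lem:proj} inside $\g_n$ but starting its induction at $l=m$: the elementary automorphisms used there, of the form $\exp\operatorname{ad}_{\g_n}(-y'\otimes T^{l+1})$ with $[y',x_0]$ the relevant component, fix all coordinates of index $\leq l$ and only use $y_0=x_0$, hence leave $\mathbf{y}(m)$ unchanged while pushing the coordinates of index $m+1,\dots,n$ into $\mathfrak{s}$; subtracting $x_\mathrm{ss}\otimes1$ then produces $\mathbf{z}\in\mathfrak{s}_\infty$ with $\mathbf{z}(m)=\Tilde{\mathbf{x}}(m)$ and, by the centralizer identity, $\dim\mathfrak{s}_n^{\mathbf{z}(n)}=\dim\g_n^{\mathbf{y}(n)}$. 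Taking minima and subtracting the equal base dimensions $\dim\g_m^{\bar{\mathbf{x}}(m)}=\dim\mathfrak{s}_m^{\Tilde{\mathbf{x}}(m)}$ then yields $\Mod_{m,n}(\mathbf{x})=\Mod_{m,n}(\Tilde{\mathbf{x}})$, the latter computed in $\mathfrak{s}_\infty$.

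The step I expect to be the main obstacle is the Jordan-decomposition assertion for $\bar{\mathbf{x}}(k)$ in $\g_k$ together with the identification $\g_k^{x_\mathrm{ss}\otimes1}=\mathfrak{s}_k$: one must argue carefully, via the $T$-adic filtration and uniqueness of Jordan decomposition, that the coordinatewise ``semisimple plus nilpotent'' splitting genuinely is the Jordan decomposition in the non-reductive algebra $\g_k$, so that Lemma~\ref{lem:Jordan decomp} applies there. A secondary point is the mild strengthening of Lemma~\ref{lem:proj} used in the last paragraph --- that its normalization can be carried out while keeping a prescribed initial segment $\mathbf{y}(m)$ of the coordinates fixed --- which only requires observing that the elementary conjugations in its proof act trivially on the low-degree coordinates.
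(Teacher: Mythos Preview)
Your argument is correct and follows the same overall strategy as the paper --- apply Lemma~\ref{lem:proj} with $V=[\g,x_\mathrm{ss}]$ and $S=\mathfrak{s}$, then identify the centralizer in $\g_n$ with one in $\mathfrak{s}_n$ --- but the central identification is carried out differently. The paper does not invoke a Jordan decomposition inside $\g_k$; instead it shows $\g_m^{\bar{\mathbf{x}}(m)}\subset\mathfrak{s}_m$ by an elementary coordinate-wise induction: from $[\mathbf{y}(m),\bar{\mathbf{x}}(m)]=0$ one reads off $\sum_{i=0}^{l+1}[y_i,\bar{x}_{l+1-i}]=0$, and since $\bar{x}_j\in\mathfrak{s}$ for $j\geq1$ and $y_i\in\mathfrak{s}$ for $i\leq l$ by the induction hypothesis, the equation forces $[y_{l+1},x_0]\in\mathfrak{s}$, whence $[y_{l+1},x_\mathrm{ss}]=0$ via $\mathfrak{s}\cap[\g,x_\mathrm{ss}]=0$. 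Only at the very end is $x_\mathrm{ss}$ removed from the zeroth coordinate using that it is central in $\mathfrak{s}$. Your route via the Jordan decomposition of $\bar{\mathbf{x}}(k)$ in $\g_k$ is more conceptual and yields the inclusion in one stroke, at the price of the check you correctly flag: Lemma~\ref{lem:Jordan decomp} is stated for reductive $\g$, but the equality $\g_k^{\bar{\mathbf{x}}(k)}=(\g_k^{x_\mathrm{ss}\otimes1})^{\Tilde{\mathbf{x}}(k)}$ only needs that $\operatorname{ad}(x_\mathrm{ss}\otimes1)$ is semisimple, $\operatorname{ad}(\Tilde{\mathbf{x}}(k))$ is nilpotent, and the two commute --- all of which you verify. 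The paper's induction sidesteps this reductivity concern entirely.

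Your treatment of the ``in particular'' clause is also more explicit than the paper's, which leaves the modality equality to the reader. Your observation that the inductive conjugations in Lemma~\ref{lem:proj} can be started at $l=m$ without disturbing the first $m+1$ coordinates is exactly what is needed to match the two minima, and it is immediate from the form $\exp\operatorname{ad}_{\g_n}(-y'\otimes T^{l+1})$ of those automorphisms.
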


\begin{proof}
    By Lemma \ref{lem:Jordan decomp}, we can apply Lemma \ref{lem:proj} with $V=[\g,x_\mathrm{ss}]$ and $S=\mathfrak{s}$. So there exists $\Tilde{\mathbf{x}}\in \mathfrak{s}_\infty$ such that $\Tilde{x}_0=x_0$ and $g\cdot\Tilde{\mathbf{x}}(m)=\mathbf{x}(m)$ for some $g\in\G_m$. 

    This implies $g\cdot\g_m^{\Tilde{\mathbf{x}}(m)}=\g_m^{\mathbf{x}(m)}$. We claim that $\g_m^{\Tilde{\mathbf{x}}(m)}\subset\mathfrak{s}_m$.
    
    Indeed, let $\mathbf{y}(m)$ be an element of $\g_m^{\Tilde{x}(m)}$. In particular, $[y_0,\Tilde{x}_0]=0.$ As $x_0=\Tilde{x}_0$, we have $y_0\in\g^{x_0}$ and Lemma \ref{lem:Jordan decomp} implies that $y_0=\mathfrak{s}^{x_{\mathrm{nil}}}.$

    Suppose that we have shown that $\mathbf{y}(l)$ is in $\mathfrak{s}_l$ for some integer $l$ between $0$ and $m-1$. From $[\mathbf{y}(m),\mathbf{\Tilde{x}}(m)]=0$, we obtain that
    $$\sum_{i=0}^{l+1}{[y_i,\Tilde{x}_{l+1-i}]}=0.$$

    In particular, as $\mathbf{\Tilde{x}}\in\mathfrak{s}_\infty$, we have $[y_{l+1},x_0]\in\mathfrak{s}.$

    Now, as $[y_{l+1},x_0]=[y_{l+1},x_\mathrm{ss}]+[y_{l+1},x_{\mathrm{nil}}]$ and $\mathfrak{s}\cap[\g,x_\mathrm{ss}]=\{0\}$, we deduce that $[y_{l+1},x_\mathrm{ss}]=0$, hence $y_{l+1}\in\mathfrak{s}$.

    By a finite iteration we obtain our claim. Now, as $x_\mathrm{ss}$ is central in $\mathfrak{s}$, we have $\mathfrak{s}_m^{\Tilde{\mathbf{x}}(m)}=\mathfrak{s}_m^{(x_{\mathrm{nil}},\Tilde{x}_1,..,\Tilde{x}_m)}$, hence $(x_{\mathrm{nil}},\Tilde{x}_1,\Tilde{x}_2,...)$ satisfies the required conditions.
\end{proof}

Looking at the first step of the recursion in the proof of Lemma \ref{lem:proj} with $V=[\g,x_\mathrm{ss}]$ and $S=\mathfrak{s}$, we can explicit  $\Tilde{x}_1$ to deduce the following reduction for $(1,n)$-modality

\begin{cor}
    Let $\mathbf{x}$ be an element of $\g_\infty$, $x_0=x_\mathrm{ss}+x_\mathrm{nil}$ the Jordan decomposition of $x_0$, $\mathfrak{s}$ the centralizer of $x_\mathrm{ss}$ in $\g$. Let $\Tilde{\mathbf{x}}\in\mathfrak{s}_\infty$ such that $\Tilde{x}_0=x_\mathrm{nil}$ and $\Tilde{x}_1$ is the projection of $x_1$ on $\mathfrak{s}$ parallel to $[\g,x_\mathrm{ss}]$ the $(1,n)$-modality of $\mathbf{x}$ as an element of $\g_\infty$ is equal to the $(1,n)$-modality of $\Tilde{\mathbf{x}}$ as an element of $\mathfrak{s}_\infty$.
\end{cor}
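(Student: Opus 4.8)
The plan is to extract from the proof of Proposition~\ref{prop: nilp red} the explicit formula for $\Tilde{x}_1$ in the first step of the induction in Lemma~\ref{lem:proj}, and then invoke Proposition~\ref{prop: nilp red} almost verbatim, only paying attention to what happens in degree $1$. Recall from Lemma~\ref{lem:Jordan decomp} that $\g=[\g,x_\mathrm{ss}]\oplus\mathfrak{s}$ (where $\mathfrak{s}=\g^{x_\mathrm{ss}}$), so the projection of $x_1$ onto $\mathfrak{s}$ parallel to $[\g,x_\mathrm{ss}]$ is well-defined; call it $x_1^{\mathfrak{s}}$, and write $x_1=x_1^{\mathfrak{s}}+[y',x_\mathrm{ss}]$ for some $y'\in\g$ (here I use that $[\g,x_\mathrm{ss}]=\{[y',x_\mathrm{ss}]\mid y'\in\g\}$). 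In the notation of Lemma~\ref{lem:proj} with $V=[\g,x_\mathrm{ss}]$ and $S=\mathfrak{s}$, the $l=0$ step replaces $\mathbf{x}$ by $g\cdot\mathbf{x}(m)$ with $g=\exp\operatorname{ad}_{\g_m}(-y'\otimes T)$, and the computation there shows precisely that $\Tilde{z}_0=x_0$ and $\Tilde{z}_1=x_1-[y',x_0]$. Since $[y',x_0]=[y',x_\mathrm{ss}]+[y',x_\mathrm{nil}]$, this is $\Tilde{z}_1=x_1^{\mathfrak{s}}-[y',x_\mathrm{nil}]$, which already lies in $\mathfrak{s}$ because $x_\mathrm{nil}\in\mathfrak{s}$ and $\mathfrak{s}$ is a subalgebra; its component parallel to $[\g,x_\mathrm{ss}]$ is zero, so modulo the $\mathfrak{g}_1$-centralizer-preserving operations this degree-$1$ coefficient is determined up to the correction term $[y',x_\mathrm{nil}]\in[\mathfrak{s},x_\mathrm{nil}]\subset[\mathfrak{s},\mathfrak{s}]$.

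First I would note that the subsequent steps $l\geq 1$ of the induction in Lemma~\ref{lem:proj} only modify coefficients $\Tilde{x}_k$ with $k\geq 2$, hence do not touch $\Tilde{x}_1$; this is immediate from the formula $\Tilde{z}_{l+1}=\Tilde{y}_{l+1}-[y',x_0]$ applied with the automorphism in degree $l+1\geq 2$. So after running the whole procedure we obtain $\Tilde{\mathbf{x}}\in\mathfrak{s}_\infty$ with $\Tilde{x}_0=x_\mathrm{nil}$, with $\Tilde{x}_1=x_1^{\mathfrak{s}}-[y',x_\mathrm{nil}]$, and with $\mathfrak{g}_n^{\Tilde{\mathbf{x}}(n)}$ conjugate to $\mathfrak{g}_n^{\mathbf{x}(n)}$ and equal to $\mathfrak{s}_n^{\Tilde{\mathbf{x}}(n)}$, exactly as in Proposition~\ref{prop: nilp red}. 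By the last sentence of that Proposition, $\Mod_{1,n}(\mathbf{x})$ computed in $\g_\infty$ equals $\Mod_{1,n}(\Tilde{\mathbf{x}})$ computed in $\mathfrak{s}_\infty$.

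It remains to see that replacing $\Tilde{x}_1$ by the ``cleaner'' value $x_1^{\mathfrak{s}}$ (dropping the term $[y',x_\mathrm{nil}]$) does not change the $(1,n)$-modality. For this I would observe that $[y',x_\mathrm{nil}]\in[\mathfrak{s},x_\mathrm{nil}]\subset[\mathfrak{s},\Tilde{x}_0]$, so inside $\mathfrak{s}_\infty$ we may apply the elementary automorphism $\exp\operatorname{ad}_{\mathfrak{s}_n}(y''\otimes T)$ for a suitable $y''\in\mathfrak{s}$ with $[y'',x_\mathrm{nil}]=[y',x_\mathrm{nil}]$ (such $y''$ exists because $[\mathfrak{s},x_\mathrm{nil}]=[\mathfrak{s},x_\mathrm{nil}]$ trivially, taking $y''=y'$ works if $y'\in\mathfrak{s}$, and in general one can replace $y'$ by its $\mathfrak{s}$-component since the $[\g,x_\mathrm{ss}]$-component contributes nothing to $[\,\cdot\,,x_\mathrm{nil}]$ modulo $[\g,x_\mathrm{ss}]$—this is the only point needing a small check). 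Conjugating by this automorphism fixes degree $0$, sends $\Tilde{x}_1$ to $\Tilde{x}_1+[y'',x_\mathrm{nil}]=x_1^{\mathfrak{s}}$, and alters only higher degrees; since $\Mod_{1,n}$ depends only on the $\mathfrak{s}_1$-orbit of $\Tilde{\mathbf{x}}(1)$, and $(x_\mathrm{nil},x_1^{\mathfrak{s}})$ and $(x_\mathrm{nil},\Tilde{x}_1)$ are $\mathbf{S}_1$-conjugate, the two modalities coincide. The main obstacle is this last bookkeeping step: making sure the degree-$1$ correction term is genuinely realizable by an $\mathfrak{s}$-inner automorphism so that one lands exactly on the projection $x_1^{\mathfrak{s}}$ rather than on some other representative of the same orbit; everything else is a direct transcription of Proposition~\ref{prop: nilp red} and Lemma~\ref{lem:proj}.
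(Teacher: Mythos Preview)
Your overall strategy---extract the degree-$1$ output of the recursion in Lemma~\ref{lem:proj} and then invoke Proposition~\ref{prop: nilp red}---is exactly the paper's one-line argument, but you misread the choice of $y'$ in Lemma~\ref{lem:proj} and this creates an unnecessary complication together with a genuine error.

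In Lemma~\ref{lem:proj} the element $y'$ is chosen so that $[y',x_0]=x_1^V$, \emph{not} so that $[y',x_\mathrm{ss}]=x_1^V$. This is possible precisely because $V=[\g,x_\mathrm{ss}]\subset[\g,x_0]$ by Lemma~\ref{lem:Jordan decomp}. With that choice one gets directly
\[
\Tilde{z}_1=x_1-[y',x_0]=x_1-x_1^V=x_1^{\mathfrak{s}},
\]
the projection of $x_1$ onto $\mathfrak{s}$ parallel to $[\g,x_\mathrm{ss}]$, and the corollary follows at once from Proposition~\ref{prop: nilp red}. No ``bookkeeping step'' is needed.

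With your choice $[y',x_\mathrm{ss}]=x_1^V$ you obtain $\Tilde{z}_1=x_1^{\mathfrak{s}}-[y',x_\mathrm{nil}]$, and the assertion that this already lies in $\mathfrak{s}$ ``because $x_\mathrm{nil}\in\mathfrak{s}$ and $\mathfrak{s}$ is a subalgebra'' is false: $y'$ is only in $\g$, and since $\operatorname{ad}(x_\mathrm{nil})$ commutes with $\operatorname{ad}(x_\mathrm{ss})$ it preserves $[\g,x_\mathrm{ss}]$, so the $[\g,x_\mathrm{ss}]$-component of $y'$ contributes a term of $[y',x_\mathrm{nil}]$ lying in $[\g,x_\mathrm{ss}]$, not in $\mathfrak{s}$. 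Consequently your $\Tilde{z}_1$ need not be in $\mathfrak{s}$, the $l=0$ step of the recursion is not yet finished, and your stated formula $\Tilde{x}_1=x_1^{\mathfrak{s}}-[y',x_\mathrm{nil}]$ is not what the procedure actually produces. The entire ``main obstacle'' paragraph is an attempt to undo this self-inflicted difficulty; it disappears once you take $y'$ with $[y',x_0]=x_1^V$ as the lemma prescribes.
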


Using this corollary and Proposition \ref{prop:shift}, Theorem \ref{thm:main} implies

\begin{cor}
     Let $\mathbf{x}$ be an element of $\g_\infty$ such that $x_0$ is semisimple then for all natural numbers $n\geq1$, we have
     \[\Mod_{1,n}(\mathbf{x})=(n-1)\chi(\g).\]
\end{cor}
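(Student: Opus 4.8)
The plan is to reduce, via the preceding Corollary, the $(1,n)$-modality of $\mathbf{x}$ in $\g_\infty$ to a $(1,n)$-modality inside the reductive Lie algebra $\mathfrak{s}=\g^{x_\mathrm{ss}}$, then peel off the leading zero using the shift property (Proposition \ref{prop:shift}), and finally invoke Theorem \ref{thm:main}.

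First I would record that $\mathfrak{s}=\g^{x_\mathrm{ss}}$, the centralizer of the semisimple part of $x_0$, is a reductive Lie algebra, and that by Elashvili's conjecture (Theorem \ref{conj:Elashvili}) we have $\chi(\mathfrak{s})=\chi(\g^{x_\mathrm{ss}})=\chi(\g)$. Since $x_0$ is semisimple, its nilpotent part vanishes, so the preceding Corollary produces an element $\Tilde{\mathbf{x}}$ of $\mathfrak{s}_\infty\subset\g_\infty$ with $\Tilde{x}_0=x_\mathrm{nil}=0$ such that $\Mod_{1,n}(\mathbf{x})$, computed in $\g_\infty$, equals $\Mod_{1,n}(\Tilde{\mathbf{x}})$, computed in $\mathfrak{s}_\infty$; the precise form of $\Tilde{x}_1$ given there will not be needed.

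Next, since $\Tilde{x}_0=0$, I would present $\Tilde{\mathbf{x}}$ as the $k=1$ shift of the element $\mathbf{w}$ of $\mathfrak{s}_\infty$ defined by $w_i:=\Tilde{x}_{i+1}$ for all natural numbers $i$; that is, $\Tilde{\mathbf{x}}$ is exactly the element $\mathbf{w}'$ associated to $\mathbf{w}$ in Proposition \ref{prop:shift}. Applying that proposition inside $\mathfrak{s}_\infty$ with $m=0$, $k=1$ and $n$ replaced by $n-1$ (which is $\geq 0$ since $n\geq 1$) yields $\Mod_{0,n-1}(\mathbf{w})=\Mod_{1,n}(\Tilde{\mathbf{x}})$.

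Finally, because $\mathfrak{s}$ is reductive, Theorem \ref{thm:main} applied in $\mathfrak{s}_\infty$ gives $\Mod_{0,n-1}(\mathbf{w})=(n-1)\chi(\mathfrak{s})=(n-1)\chi(\g)$, and chaining the three equalities gives the claim. I do not expect a genuine obstacle: the substantive content has already been established in Proposition \ref{prop: nilp red} and its corollary, in Proposition \ref{prop:shift}, and in Theorem \ref{thm:main}. The only points requiring care are the index bookkeeping in the shift and the (standard) fact that the centralizer of a semisimple element in a reductive Lie algebra is reductive, so that Theorem \ref{thm:main} legitimately applies to $\mathfrak{s}$.
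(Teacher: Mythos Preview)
Your proposal is correct and follows exactly the route the paper indicates: reduce to $\mathfrak{s}=\g^{x_0}$ via the preceding corollary (with $\Tilde{x}_0=x_{\mathrm{nil}}=0$), shift by $k=1$ via Proposition \ref{prop:shift} to land at a $(0,n-1)$-modality in $\mathfrak{s}_\infty$, and then apply Theorem \ref{thm:main} together with $\chi(\mathfrak{s})=\chi(\g)$ from Theorem \ref{conj:Elashvili}. The paper's proof is the single sentence ``Using this corollary and Proposition \ref{prop:shift}, Theorem \ref{thm:main} implies,'' and you have unpacked it accurately, including the two auxiliary facts (reductivity of $\mathfrak{s}$ and the equality of indices) that make the final step legitimate.
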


\begin{ack*}
    The author wishes to thank Rupert Yu for fruitful discussions on this paper and the anonymous reviewer whose suggestions greatly helped improve its presentation.
\end{ack*}

\bibliographystyle{plainnat}
\bibliography{references.bib}

\end{document}